\newtoks\prt 
\newtheorem{thma}{Theorem}
\newtheorem{thm}{Theorem}[section]
\newtheorem{problem}{Problem}
\newtheorem{lemma}[thm]{Lemma} 
\newtheorem{prop}[thm]{Proposition} 
\newtheorem{cor}[thm]{Corollary} 
\newtheorem{example}[thm]{Example}
\theoremstyle{definition} 
\newtheorem{remark}[thm]{Remark}
\def\eqn#1$$#2$${\begin{equation}\label#1#2\end{equation}}
\def\C{\mathcal C}
\def\en{\mathbb N}
\def\dist{\operatorname{dist}}
\def \reg {\partial _{\kern1pt\text{reg}}}
\def\ip#1#2{\left\langle#1,#2\right\rangle}
\def\di{\,\mbox{\rm d}}
\def\dh{\widehat{\operatorname{d}}}
\def\clu#1{\operatorname{clust}_{w^*}(#1)}
\newcommand{\norm}[1]{\left\|#1\right\|}
\renewcommand{\Re}{\operatorname{Re}}
\newcommand{\wk}[2][X]{\operatorname{wk}_{#1}\left(#2\right)}
\newcommand{\wck}[2][X]{\operatorname{wck}_{#1}\left(#2\right)}
\newcommand{\wscl}[1]{\overline{#1}^{w^*}}
\newcommand{\bchi}{\mbox{\Large$\chi$}}
\newcommand{\abs}[1]{\left|#1\right|}
\newcommand{\setsep}{;\,}
\begin{document}

\title{Measures of weak non-compactness in spaces of nuclear operators}

\author{Jan Hamhalter and Ond\v{r}ej F.K. Kalenda}

\address{Czech Technical University in Prague, Faculty of Electrical Engineering, Department of Mathematics, Technick\'a 2, 166~27 Prague 6, Czech Republic}

\email{hamhalte@math.feld.cvut.cz}

\address{Charles University, Faculty of Mathematics and Physics, Department of Mathematical Analysis,
Sokolovsk\'a 86, 186~75 Praha 8, Czech Republic}

\email{kalenda@karlin.mff.cuni.cz}

\begin{abstract}
We show that in the space of nuclear operators from $\ell^q(\Lambda)$ to $\ell^p(J)$ the two natural ways of measuring weak non-compactness coincide. 
We also provide explicit formulas for these measures. As a consequence the same is proved for preduals of atomic von Neumann algebras.
\end{abstract}

\keywords{Measure of weak non-compactness, space of nuclear operators, space of compact operators, predual of an atomic von Neumann algebra}

\thanks{Our research was supported in part by the grant
GA\v{C}R 17-00941S}

\subjclass[2010]{46B04; 46B50; 46B28; 46L10; 47B10}

\maketitle

\section{Introduction}

There are several natural ways how to measure weak non-compactness of bounded subsets of Banach spaces. One possible way was introduced in \cite{deblasi} where it was used to prove stronger and more precise versions of some results on weak compactness, including a fixed-point theorem. Another approach was used to prove a quantitative version of the Krein theorem -- it was done independently in three papers \cite{f-krein,Gr-krein,CMR} using different methods. The second approach inspired a fruitful research, the applications include quantitative versions of several  classical theorems on weak compactness (Eberlein-\v{S}mulyan theorem \cite{AC-jmaa}, Gantmacher theorem \cite{AC-meas},
James compactness theorem \cite{CKS,Gr-James}),
a characterization of subspaces of weakly compactly generated spaces \cite{f-subswcg} or 
a quantitative view on several properties of Banach spaces (Dunford-Pettis property \cite{qdpp}, reciprocal Dunford-Pettis property \cite{rdpp}, Banach-Saks property \cite{BKS-bs} etc.).

It turns out that there are two essentially nonequivalent ways of measuring weak non-compactness -- the one introduced in \cite{deblasi}
and the one used in all the other above-quoted papers. The nonequivalence of the two approaches follows from \cite{tylli-cambridge} as it was explicitly noted in \cite{AC-meas}. The counterexample is constructed as the $c_0$-sum of a sequence of Banach spaces obtained by a suitable renorming of the space $c_0$.
On the other hand, in certain classical spaces the two approaches are equivalent \cite{qdpp}. So, it seems to be an interesting problem whether there is some natural classical space in which the two approaches are not equivalent. 

In the present paper we show that 
the two ways of measuring weak non-com\-pact\-ness coincide in certain spaces of nuclear operators. Let us start by recalling the basic definitions and giving precise formulations of some of the above-mentioned results and problems.

Let $X$ be a Banach space and $A,B\subset X$ two nonempty sets. 
We set
$$\dh(A,B)=\sup\{\dist(a,B)\setsep a\in A\}.$$
This quantity measures how much the set $A$ sticks out from the set $B$ and sometimes it is called the \emph{excess} of $A$ from $B$. Note, that the order of $A$ and $B$ does matter and that $\max\{\dh(A,B),\dh(B,A)\}$ is the Hausdorff distance of the sets $A$ and $B$. 

The quantity $\dh$ is used to define several measures of (weak) non-compactness. Although we focus on weak non-compactness, we begin by the \emph{Hausdorff measure of (norm) non-compactness} which is defined by the formula
$$\chi(A)=\inf\{\dh(A,F)\setsep F\subset X\mbox{ finite}\}=\inf\{\dh(A,K)\setsep K\subset X\mbox{ compact}\}$$
for a bounded set $A\subset X$. It is clear that $\chi(A)=0$ if and only if $A$ is relatively norm compact. 

\emph{De Blasi measure of weak non-compactness} introduced in \cite{deblasi} is defined by 
$$\omega(A)=\inf\{\dh(A,K)\setsep K\subset X\mbox{ weakly compact}\}.$$
It is a natural modification of the Hausdorff measure of non-compactness. Further, $\omega(A)=0$ if and only if $A$ is relatively weakly compact. As remarked in \cite{deblasi} this was proved already by Grothendieck \cite[p. 401]{gro-book}.

Another measure of weak non-compactness inspired by the Banach-Alaoglu theorem is defined by
$$\wk{A}=\dh(\wscl{A},X).$$
Here $\wscl{A}$ is the closure of $A$ in the space $(X^{**},w^*)$, where $X$ is considered to be canonically embedded into its bidual.
It is a direct consequence of the Banach-Alaoglu theorem that a bounded set $A\subset X$ is relatively weakly compact if and only if $\wk{A}=0$. This measure was used, explicitly or implicitly and using different notations, in the above-quoted papers \cite{f-krein,Gr-krein,AC-jmaa,f-subswcg,CKS}. It was established in these papers that it is equivalent to several other measures of weak non-compactness. We will mention and use only one more measure inspired by the Eberlein-\v{S}mulyan theorem and defined by
$$\wck{A}=\sup\{\dist(\clu{x_n},X)\setsep (x_n)\mbox{ is a sequence in }A\},$$
where $\clu{x_n}$ denotes the set of all the weak$^*$-cluster points of the sequence $(x_n)$ in the bidual $X^{**}$. It follows easily from the Eberlein-\v{S}mulyan theorem that $\wck{A}=0$ whenever $A$ is relatively weakly compact. The converse follows from the quantitative version of the Eberlein-\v{S}mulyan theorem proven in \cite{AC-jmaa}. It consists in the inequalities
$$\wck{A}\le \wk{A}\le 2\wck{A}$$
which hold for any bounded subset $A\subset X$.
Further, the following inequalities are easy to check:
$$\wk{A}\le\omega(A)\le\chi(A).$$
The quantities $\wk{\cdot}$ and $\omega(\cdot)$ in general are not equivalent. As mentioned above, this was proved in \cite{tylli-cambridge,AC-meas}. On the other hand, in some classical spaces
the two quantities coincide. Let us recall these results.

The first one concerns the Lebesgue spaces of integrable functions.

\begin{thma} \cite[Lemma 7.4 and Theorem 7.5]{qdpp}
Let $X=L^1(\mu)$ where $\mu$ is any nonegative $\sigma$-additive measure
(not necessarily $\sigma$-finite). Then 
$$\omega(A)=\wk{A}=\wck{A}=\inf\left\{\sup_{f\in A}\int (\abs{f}-c\bchi_E)^+\di\mu \setsep c>0,\mu(E)<\infty \right\} $$
for any bounded set $A\subset X$.
\end{thma}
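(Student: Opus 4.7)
The plan is to exploit the a priori chain $\wck{A}\le\wk{A}\le\omega(A)$ from the introduction, denote by $\tau(A)$ the infimum on the right-hand side of the displayed formula, and reduce the equality to proving the two bounds $\omega(A)\le\tau(A)$ and $\tau(A)\le\wck{A}$.

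For $\omega(A)\le\tau(A)$, a truncation argument suffices. Given $c>0$ and $E$ with $\mu(E)<\infty$, set $T_{c,E}(f)=(\operatorname{sgn} f)\min(|f|,c)\bchi_E$. A pointwise case analysis yields the identity $\|f-T_{c,E}(f)\|_1=\int(|f|-c\bchi_E)^+\di\mu$. The family $K_{c,E}=\{T_{c,E}(f):f\in A\}$ is bounded in $L^\infty$ and supported in the single finite-measure set $E$; by the Dunford-Pettis theorem it is relatively weakly compact, and hence
$$\omega(A)\le\dh\bigl(A,K_{c,E}\bigr)\le\sup_{f\in A}\|f-T_{c,E}(f)\|_1=\sup_{f\in A}\int(|f|-c\bchi_E)^+\di\mu.$$
Taking the infimum over admissible $(c,E)$ yields the claimed bound.

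For the substantive inequality $\tau(A)\le\wck{A}$, I would argue contrapositively. Fix $\alpha'<\alpha$ with $\wck{A}<\alpha'<\alpha<\tau(A)$; the assumption then says that for every admissible pair $(c,E)$ there is some $f\in A$ with $\int(|f|-c\bchi_E)^+\di\mu>\alpha$. I would use this to extract inductively a sequence $(f_n)\subset A$ together with pairwise disjoint finite-measure sets $(G_n)$ such that $\int_{G_n}|f_n|\di\mu\ge\alpha'$. The step is: having chosen $E_{n-1}=G_1\cup\cdots\cup G_{n-1}$ of finite measure, select $c_n$ growing to infinity, pick $f_n\in A$ whose excess $(|f_n|-c_n\bchi_{E_{n-1}})^+$ integrates to more than $\alpha$, and cut $G_n$ out of the support of this excess disjointly from $E_{n-1}$. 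Any $w^*$-cluster point $F\in L^1(\mu)^{**}$ of $(f_n)$ then carries, via the Yosida-Hewitt decomposition $L^1(\mu)^{**}=L^1(\mu)\oplus_1 L^1(\mu)^{\perp}$, a purely finitely additive component of norm at least $\alpha'$; since $\dist(F,L^1(\mu))$ equals the norm of that component, this forces $\wck{A}\ge\alpha'$, contradicting $\wck{A}<\alpha'$.

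The main obstacle is the inductive extraction. Three constraints must be met together in the non-$\sigma$-finite regime: $G_n$ must have finite measure (forcing $G_n$ to sit inside a Markov-controlled sublevel set $\{|f_n|\ge\delta_n\}$); $G_n$ must be disjoint from $E_{n-1}$ (so the bulk of the excess $(|f_n|-c_n\bchi_{E_{n-1}})^+$ must be confined off $E_{n-1}$, which forces $c_n$ to be chosen in tandem with the structure of $A$); and the disjoint mass concentration must translate into a lower bound on the purely finitely additive component of a bidual cluster point via Yosida-Hewitt---a step that needs a further ultrafilter refinement separating weak$^*$-limits of $f_n\bchi_{G_n}$ and $f_n\bchi_{G_n^c}$ to isolate the pfa mass. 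It is the joint calibration of $(c_n)$, $(E_{n-1})$, and the ultrafilter that drives the whole proof.
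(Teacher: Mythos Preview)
The paper does not give its own proof of this statement: Theorem~A is quoted verbatim from \cite[Lemma~7.4 and Theorem~7.5]{qdpp} and is immediately followed by the next theorem, with no intervening proof environment. There is therefore nothing in the paper to compare your proposal against.

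On the substance of your sketch: the upper bound $\omega(A)\le\tau(A)$ via the truncation $T_{c,E}$ is clean and correct. For the lower bound $\tau(A)\le\wck{A}$, your outline is in the right spirit (gliding hump plus $L$-embeddedness of $L^1(\mu)$ in its bidual), but the inductive step as written has a real issue you have not resolved. From
\[
\int(|f_n|-c_n\chi_{E_{n-1}})^+ \,d\mu=\int_{E_{n-1}}(|f_n|-c_n)^+\,d\mu+\int_{E_{n-1}^c}|f_n|\,d\mu>\alpha
\]
you want the second summand to exceed $\alpha'$, but choosing $c_n$ large does not force $\int_{E_{n-1}}(|f_n|-c_n)^+\,d\mu$ to be small uniformly over $A$: a bounded $L^1$-set restricted to a fixed finite-measure set need not be uniformly integrable, so $\sup_{f\in A}\int_{E_{n-1}}(|f|-c)^+\,d\mu$ can stay bounded away from zero as $c\to\infty$. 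Your closing paragraph acknowledges that the ``joint calibration of $(c_n)$, $(E_{n-1})$, and the ultrafilter'' is the crux, but as it stands the proposal does not indicate how to carry it out; completing it would amount to reproving the cited results from \cite{qdpp}.
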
 

The next result concerns a special case of the previous one, namely the space $\ell^1(\Gamma)$. In this case the formula is easier and, moreover,
due to the Schur property the measures of weak non-compactness coincide also with the Hausdorff measure of norm non-compactness.

\begin{thma} \cite[Proposition 7.3]{qdpp}
Let $X=\ell^1(\Gamma)$ for an arbitrary set $\Gamma$. Then
$$\chi(A)=\omega(A)=\wk{A}=\wck{A}=\inf\left\{\sup_{x\in A}\sum_{\gamma\in\Gamma\setminus F}\abs{x_\gamma}\setsep F\subset\Gamma\mbox{ finite} \right\}$$
for any bounded set $A\subset X$.
\end{thma}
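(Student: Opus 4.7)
Denote the right-hand infimum by $\nu(A)$. The inequality chain $\wck{A}\le\wk{A}\le\omega(A)\le\chi(A)$ from the introduction reduces the theorem to proving the two outer bounds $\chi(A)\le\nu(A)$ and $\nu(A)\le\wck{A}$, which will pinch all four quantities together. The first is immediate: for any finite $F\subset\Gamma$, the restriction $\pi_F(x)=x\chi_F$ sends $A$ into the finite-dimensional space $\ell^1(F)$, hence $\pi_F(A)$ is relatively norm-compact; together with $\|x-\pi_F(x)\|_1=\sum_{\gamma\notin F}|x_\gamma|$ this yields $\chi(A)\le\nu(A)$ after taking the infimum over $F$.

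For $\nu(A)\le\wck{A}$, suppose $\nu(A)>\delta$; the goal is to exhibit a sequence in $A$ whose weak$^*$-cluster points in $X^{**}$ lie at distance at least $\delta$ from $X$. A key observation is that for every finite $F\subset\Gamma$ the set $\{x\in A\setsep\sum_{\gamma\notin F}|x_\gamma|>\delta\}$ is in fact \emph{infinite}: otherwise adjoining a fine enough tail-support of each exceptional element to $F$ would produce a finite set violating $\nu(A)>\delta$. This enables an inductive construction of distinct $(x_n)\subset A$ and an increasing chain $F_0=\emptyset\subsetneq F_1\subsetneq F_2\subsetneq\cdots$ of finite sets with $\sum_{\gamma\notin F_{n-1}}|x_n^\gamma|>\delta$ and $\sum_{\gamma\notin F_n}|x_n^\gamma|<\eta 2^{-n}$. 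Since $F_\infty:=\bigcup_n F_n$ is countable, a Cantor diagonal extraction yields a subsequence $(y_l)$ of $(x_n)$ converging coordinatewise on $F_\infty$ to some $\alpha$, which lies in $\ell^1(\Gamma)$ by Fatou; a further diagonal refinement makes the translates $z_l:=y_l-\alpha$ \emph{asymptotically disjointly supported} on the pairwise disjoint finite sets $E_l=G_l\setminus G_{l-1}$ (with $G_l$ the $F$-set attached to $y_l$), satisfying $\|z_l\chi_{E_l}\|_1\ge\delta-o(1)$ and $\|z_l\chi_{\Gamma\setminus E_l}\|_1=o(1)$.

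A gliding-hump functional now closes the argument. Let $\phi$ be any weak$^*$-cluster point of $(y_l)$ in $X^{**}$, fix $w\in X$ and $\eta'>0$, and choose a finite $G\subset\Gamma$ with $\|w\chi_{\Gamma\setminus G}\|_1+\|\alpha\chi_{\Gamma\setminus G}\|_1<\eta'$. Set
$$f(\gamma)=\begin{cases}\operatorname{sign}(z_l^\gamma) & \text{if $\gamma\in E_l\setminus G$ for some $l$,}\\ 0 & \text{otherwise,}\end{cases}$$
so that $f\in\ell^\infty(\Gamma)$ and $\|f\|_\infty\le 1$. Because $G$ is finite and the $E_l$ pairwise disjoint, $G$ meets only finitely many of them, and splitting $\la z_l,f\ra$ into its $E_l$-contribution and the off-$E_l$-contribution gives $\liminf_l\la z_l,f\ra\ge\delta$, whereas $|\la w,f\ra|,|\la\alpha,f\ra|<\eta'$ because $f$ is supported outside $G$. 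Hence $\phi(f)\ge\la\alpha,f\ra+\delta\ge\delta-\eta'$, so $\|\phi-w\|_{X^{**}}\ge|(\phi-w)(f)|\ge\delta-2\eta'$; letting $\eta'\to 0$ and taking the infimum over $w\in X$ gives $\dist(\phi,X)\ge\delta$, whence $\wck{A}\ge\delta$.

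The principal technical hurdle is the compatible construction of $(x_n)$, $(F_n)$, and $\alpha$: the chain $(F_n)$ depends on the choices of the $x_n$'s, yet the argument requires coordinatewise convergence on every $F_k$, so the greedy selection has to be interleaved with diagonal subsequence extractions. Crucially, without first subtracting the limit $\alpha$, the off-diagonal term $\sum_{m<l}\|y_l\chi_{E_m}\|_1$ is only bounded by $\|y_l\|_1$, i.e.\ by the diameter of $A$, which is far too coarse to produce the sharp constant $\delta$ in the gliding-hump estimate.
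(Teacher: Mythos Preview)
The paper does not give its own proof of this statement: Theorem~B is simply quoted from \cite[Proposition~7.3]{qdpp}. The only related result stated in the paper is Lemma~\ref{L:l1suma} (also quoted from \cite{qdpp}), which covers arbitrary $\ell^1$-sums of reflexive spaces; Theorem~B is the one-dimensional-summand special case. So there is no in-paper argument to compare yours against.

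That said, your argument is correct. The bound $\chi(A)\le\nu(A)$ is immediate as you say, and the gliding-hump construction for $\nu(A)\le\wck{A}$ goes through: the inductive choice of $(x_n,F_n)$, the diagonal extraction giving coordinatewise convergence on the countable set $F_\infty$, the subtraction of the limit $\alpha$, and the second refinement forcing $\|z_l\chi_{G_{l-1}}\|_1\to 0$ together yield a sequence in $A$ for which the test functional $f$ separates every weak$^*$-cluster point from $X$ by at least $\delta-2\eta'$. Two minor remarks. First, the ``infinitely many'' observation is harmless but unnecessary: the construction itself forces $\sum_{\gamma\notin F_k}|x_k^\gamma|<\eta 2^{-k}<\delta$, so $x_k$ can never be re-selected at a later stage, and in any case distinctness of the $x_n$ is not used downstream. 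Second, in the complex-scalar case ``$\operatorname{sign}$'' should be read as the complex sign and the $\liminf$ estimate applied to $\Re\langle z_l,f\rangle$; this changes nothing substantive. Your closing paragraph correctly pinpoints why subtracting $\alpha$ is essential for obtaining the sharp constant rather than a bound polluted by $\|\alpha\|_1$.
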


The last result concerns the space $c_0(\Gamma)$. Although it was essentially proven in the quoted papers, it is not explicitly formulated
in this form. Therefore we provide a proof, for the sake of completeness.

\begin{thma}
Let $X=c_0(\Gamma)$ for an arbitrary set $\Gamma$.
Then
\begin{multline*}
\omega(A)=\wk A
=\wck A\\=\sup \left\{ \inf\left\{\sup_{\gamma\in\Gamma\setminus F}\liminf_{k\to\infty}\abs{x^k_\gamma}\setsep F\subset\Gamma\mbox{ finite }\right\}\setsep (x^k)\mbox{ is a sequence in }A\right\}
\end{multline*}
for any bounded set $A\subset X$.
\end{thma}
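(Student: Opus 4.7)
The plan is to denote by $R(A)$ the right-hand side of the claimed formula and to establish three auxiliary inequalities: $R(A)\le\wck{A}$, $\wk{A}\le R(A)$, and $\omega(A)\le\wk{A}$. Combined with the universal estimates $\wck{A}\le\wk{A}\le\omega(A)$ recalled earlier, they force all four quantities to coincide with $R(A)$. Throughout I identify $c_0(\Gamma)^{**}$ with $\ell^\infty(\Gamma)$ and use the standard formula
$$\dist(u,c_0(\Gamma))=\inf\left\{\sup_{\gamma\notin F}|u_\gamma|\setsep F\subset\Gamma\text{ finite}\right\}$$
valid for every $u\in\ell^\infty(\Gamma)$.

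For $R(A)\le\wck{A}$, fix a sequence $(x^k)\subset A$; by Banach--Alaoglu $\clu{x^k}$ is nonempty, and for any $u\in\clu{x^k}$ and every $\gamma$ the coordinate $u_\gamma$ is a cluster point of the scalar sequence $(x^k_\gamma)$, so $|u_\gamma|\ge\liminf_k|x^k_\gamma|$. Applying $\inf_F\sup_{\gamma\notin F}$ yields $\dist(u,c_0(\Gamma))\ge\inf_F\sup_{\gamma\notin F}\liminf_k|x^k_\gamma|$; taking suprema over $u$ and over $(x^k)$ finishes this step. For $\wk{A}\le R(A)$, fix $u\in\wscl{A}$, let $d=\dist(u,c_0(\Gamma))$ and $\epsilon>0$; the set $\{\gamma:|u_\gamma|>d-\epsilon\}$ is infinite, so I select distinct $(\gamma_n)_{n\in\en}$ from it. Since $u\in\wscl{A}$, the basic $w^*$-neighborhood of $u$ determined by $\gamma_1,\dots,\gamma_k$ at radius $1/k$ meets $A$ at some $x^k$, so $|x^k_{\gamma_n}-u_{\gamma_n}|<1/k$ for $n\le k$ and hence $\lim_k x^k_{\gamma_n}=u_{\gamma_n}$ for each $n$. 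For every finite $F$ some $\gamma_n$ lies outside $F$, giving $\sup_{\gamma\notin F}\liminf_k|x^k_\gamma|\ge|u_{\gamma_n}|>d-\epsilon$; letting $\epsilon\to 0$ and supping over $u$ concludes.

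For $\omega(A)\le\wk{A}$, set $r=\wk{A}$ and introduce the componentwise soft-thresholding $\phi\colon\ell^\infty(\Gamma)\to\ell^\infty(\Gamma)$ defined by $\phi(u)_\gamma=u_\gamma\max(0,1-r/|u_\gamma|)$ (with $\phi(u)_\gamma=0$ when $u_\gamma=0$). The scalar map $z\mapsto z\max(0,1-r/|z|)$ is continuous with $|z-\phi(z)|\le r$ and $|\phi(z)|=\max(0,|z|-r)$; therefore $\|u-\phi(u)\|_\infty\le r$ for every $u$, and $\phi(u)\in c_0(\Gamma)$ whenever $\dist(u,c_0(\Gamma))\le r$. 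In particular $\phi(A)\subset c_0(\Gamma)$ and $\phi(A)$ is bounded. To see that $\phi(A)$ is relatively weakly compact, take any sequence $(x^k)\subset A$: the union $S$ of the supports is countable, and separability of $c_0(S)$ makes $B_{\ell^\infty(S)}$ $w^*$-metrizable, so a subsequence $x^{k_j}$ converges coordinatewise to some $u\in\ell^\infty(\Gamma)$, necessarily in $\wscl{A}$. Then $\dist(u,c_0(\Gamma))\le r$ forces $\phi(u)\in c_0(\Gamma)$; coordinatewise continuity of $\phi$ gives $\phi(x^{k_j})\to\phi(u)$ coordinatewise, which for a bounded sequence with $c_0$-valued limit means weak convergence in $c_0(\Gamma)$. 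Eberlein--\v{S}mulyan then delivers relative weak compactness of $\phi(A)$, so $K=\overline{\phi(A)}^w$ is weakly compact with $\dh(A,K)\le\sup_{x\in A}\|x-\phi(x)\|\le r$, and $\omega(A)\le r$.

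The technical core is the construction of $\phi$ in the last step: it must be coordinatewise continuous (so as to commute with $w^*$-limits), close to the identity in the supremum norm (to control the Hausdorff excess), and send the entire $w^*$-closure of $A$ into $c_0(\Gamma)$ (so that $\phi(A)$ lies in the right space). Soft-thresholding at level $r=\wk{A}$ balances these requirements exactly, and it is precisely here that the intermediate equality $\wk{A}=R(A)$ is crucially exploited, because it ensures that every $w^*$-cluster point of a sequence in $A$ is at distance at most $r$ from $c_0(\Gamma)$, which is what makes $\phi(\wscl{A})$ land in $c_0(\Gamma)$.
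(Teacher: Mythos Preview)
Your proof is correct. The two formula inequalities $R(A)\le\wck{A}$ and $\wk{A}\le R(A)$ are argued essentially as in the paper (with a harmless slip in wording: ``taking suprema over $u$'' should read ``since this holds for every $u$'', as one needs the \emph{infimum} over cluster points to bound $\dist(\clu{x^k},c_0(\Gamma))$ from below). The genuine difference lies in how you obtain $\omega(A)\le\wk{A}$. The paper imports this equality from \cite[Proposition~10.2]{qdpp} (and gets $\wk{A}=\wck{A}$ from \cite{CKS}), whereas you give a self-contained constructive argument via the coordinatewise soft-thresholding map $\phi$ at level $r=\wk{A}$: this produces an explicit weakly compact set $\overline{\phi(A)}^w$ at Hausdorff excess at most $r$ from $A$. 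Your approach is more elementary in that it avoids the black-box citations, and it yields an explicit witness for the de Blasi measure; the paper's route, on the other hand, situates the result within a general framework already developed elsewhere. One minor remark on your final paragraph: the fact that every $u\in\wscl{A}$ satisfies $\dist(u,c_0(\Gamma))\le r$ is the \emph{definition} of $r=\wk{A}$, not a consequence of the intermediate equality $\wk{A}=R(A)$, so that commentary slightly overstates the logical dependencies.
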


\begin{proof}
The equality $\omega(A)=\wk{A}$ is proved in \cite[Proposition 10.2]{qdpp}, the equality $\wk{A}=\wck{A}$ follows from \cite[Theorem 6.2]{CKS}. It remains to show that the formula gives $\wck{A}$.
To this end first observe that the bidual of $c_0(\Gamma)$ is the space $\ell^\infty(\Gamma)$ and that the weak$^*$ topology on bounded sets coincides with the topology of pointwise convergence.

The inequality `$\ge$' follows from the fact that
for any bounded sequence $(x^k)$ in $c_0(\Gamma)$ we have
$$\dist(\clu{x^k},c_0(\Gamma)) \ge \inf\left\{\sup_{\gamma\in\Gamma\setminus F}\liminf_{k\to\infty}\abs{x^k_\gamma}\setsep F\subset\Gamma\mbox{ finite }\right\}.$$
 Indeed, fix a  bounded sequence $(x^k)$ in $c_0(\Gamma)$ and any $c>\dist(\clu{x^k},c_0(\Gamma))$. So, there is some $y\in\clu{x^k}$ such that $\dist(y,c_0(\Gamma))<c$. It follows that there is a finite set $F\subset \Gamma$ such that $\sup_{\gamma\in \Gamma\setminus F}\abs{y_\gamma}<c$. Since $y_\gamma$ is a cluster point of the sequence $(x^k_\gamma)$, we get $\liminf_{k\to\infty}\abs{x^k_\gamma}\le\abs{y_\gamma}$ for any $\gamma\in\Gamma$. It follows that the quantity on the right-hand side is smaller than $c$. It completes the proof of the inequality `$\ge$'.

To prove the inequality `$\le$' fix any $c<\wck A$. It follows that there is a sequence $(x^k)$ in $A$ such that $\dist(\clu{x^k},c_0(\Gamma))>c$.
Since each $x^k$ has at most countably many nonzero coordinates, there is 
a subsequence $(x^{k_n})$ which pointwise converges on $\Gamma$. Denote the limit $y$. Then $y\in\ell^\infty(\Gamma)$ and the sequence $(x^{k_n})$ weak$^*$ converges to $y$. So, $y\in\clu{x^k}$ and hence $\dist(y,c_0(\Gamma))>c$. Since
$$\begin{aligned}
\dist(y,c_0(\Gamma))&=\inf\left\{\sup_{\gamma\in\Gamma\setminus F}\abs{y_\gamma}\setsep F\subset \Gamma\mbox{ finite}\right\}\\&
=\inf\left\{\sup_{\gamma\in\Gamma\setminus F}\lim_{k\to\infty}\abs{x^k_\gamma}\setsep F\subset \Gamma\mbox{ finite}\right\},\end{aligned}$$
the proof of `$\le$' is completed.
\end{proof}

In view of the three above-mentioned results and of the counterexample of \cite{tylli-cambridge,AC-meas} the following problem seems to be natural and quite interesting.

\begin{problem}
Is there a classical Banach space $X$ in which the measures $\omega(\cdot)$ and $\wk{\cdot}$ are not equivalent?
\end{problem}

Since the notion of a classical Banach space has no precise definition,
we formulate several more concrete questions.

\begin{problem} Let $X=\C(K)$ where $K$ is a compact Hausdorff space. Are the measures $\omega(\cdot)$ and $\wk{\cdot}$ equivalent for bounded subsets of $X$?

Is it true at least for $K=[0,1]$? Is it true at least for $K$ countable?
\end{problem}

This problem was formulated and commented already in \cite[Question 11.1]{qdpp}. Another question concerns a possible non-commutative version 
of Theorem C.

\begin{problem}
Let $X=K(H)$ be the space of compact operators on a Hilbert space (equipped with the operator norm). Are the measures $\omega(\cdot)$ and $\wk{\cdot}$ equivalent for bounded subsets of $X$?
\end{problem}

An even more ambitious problem is the following one, a positive answer would give positive answers to the previous two problems.

\begin{problem}
Let $X$ be a $\C^*$-algebra. Are the measures $\omega(\cdot)$ and $\wk{\cdot}$ equivalent for bounded subsets of $X$?
\end{problem}

The last problem we formulate concerns a non-commutative variant of Theorem~A.

\begin{problem}
Let $X$ be the predual of a von Neumann algebra. Are the measures $\omega(\cdot)$ and $\wk{\cdot}$ equivalent for bounded subsets of $X$?
\end{problem}

In the present paper we prove, among others, a partial positive answer to the last problem. The precise formulation of the results is given in the following section.

\section{Main results}\label{s:main}

Our main results are two theorems on coincidence of measures of weak non-compactness. The first one deals with certain spaces of nuclear operators and the second one with preduals of atomic von Neumann algebras. In this section we recall the necessary definitions and give precise formulations of the results.

Recall that, a linear operator $T:X\to Y$  between Banach spaces is said to be \emph{nuclear} provided there are sequences $(x_n^*)$ in $X^*$ and $(y_n)$ in $Y$ such that $\sum_{n=1}^\infty\norm{x_n^*}\cdot\norm{y_n}<\infty$ such that $Tx=\sum_{n=1}^\infty x_n^*(x)y_n$ for $x\in X$. Further, the nuclear norm $\norm{T}_N$ is defined to be the infimum of the
values $\sum_{n=1}^\infty\norm{x_n^*}\cdot\norm{y_n}<\infty$ over all possible such representations. The space of all the nuclear operators from $X$ to $Y$ equipped with the nuclear norm is denoted by $N(X,Y)$.
The main result on nuclear operators is the following theorem.

\begin{thm}\label{T:main-N} Let $J$ and $\Lambda$ be infinite sets and $p,q\in(1,\infty)$.
\begin{itemize}
	\item[(a)] If $p>q$,  then $N(\ell^q(\Lambda),\ell^p(J))$ is reflexive and hence all the measures of weak non-compactness vanish for any bounded subset of $N(\ell^q(\Lambda),\ell^p(J))$.
	\item[(b)] If $p\le q$, then $Z=N(\ell^q(\Lambda),\ell^p(J))$ is not reflexive and for any bounded set $A\subset Z$ we have
	$$\begin{aligned}	
	\omega(A)=&\wk[Z]{A}=\wck[Z]{A}\\&=\inf\left\{ \sup_{T\in A}\norm{(I-P_C)T(I-Q_D)}_N\setsep C\subset J, D\subset\Lambda \mbox{ finite}\right\},\end{aligned}$$
    where $P_C:\ell^p(J)\to\ell^p(J)$ is the canonical projection annihilating coordinates outside $C$ and $Q_D$ is the analogous projection on $\ell^q(\Lambda)$.
    \item[(c)] For any bounded set $A\subset N(\ell^q(\Lambda),\ell^p(J))$
    we have
    $$\chi(A)\le \inf\left\{ \sup_{T\in A}\norm{T- P_CT Q_D}_N\setsep C\subset J, D\subset\Lambda \mbox{ finite}\right\}\le 2\chi(A).$$
\end{itemize}
\end{thm}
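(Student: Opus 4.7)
Part (a) reduces to the classical fact that $Z=N(\ell^q(\Lambda),\ell^p(J))=\ell^{q'}(\Lambda)\widehat\otimes_\pi\ell^p(J)$ is reflexive when $p>q$. The plan is to compute $Z^*=B(\ell^{q'},\ell^{p'})$; the condition $p>q$ translates to $q'>p'$, so Pitt's theorem gives $B(\ell^{q'},\ell^{p'})=K(\ell^{q'},\ell^{p'})$; trace duality for reflexive Banach spaces with the approximation property then identifies $K(\ell^{q'},\ell^{p'})^*$ with $\ell^{q'}\widehat\otimes_\pi\ell^p=Z$, so $Z$ is reflexive and every measure of weak non-compactness vanishes on bounded subsets.

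For part (b), denote the right-hand-side infimum by $f(A)$. Non-reflexivity under $p\le q$ is witnessed by a disjointly supported sequence of matrix units $T_n=e^*_{\alpha_n}\otimes f_{j_n}$ (each of nuclear norm $1$, not weakly Cauchy). Using the chain $\wck[Z]{A}\le\wk[Z]{A}\le\omega(A)$ from the excerpt, it suffices to prove $\omega(A)\le f(A)$ and $f(A)\le\wck[Z]{A}$. For the first, fix finite $C\subset J$, $D\subset\Lambda$ and decompose $T=[P_CT+(I-P_C)TQ_D]+(I-P_C)T(I-Q_D)$; as $T$ runs through $A$, the bracketed summand lies in a weakly relatively compact set $K_{C,D}\subset Z$ because $P_C$ has finite-dimensional range and $(I-P_C)TQ_D$ has finite-dimensional domain, so each lives inside a finite direct sum of copies of a reflexive $\ell^r$-space. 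Thus $\dh(A,K_{C,D})\le\sup_{T\in A}\|(I-P_C)T(I-Q_D)\|_N$, and passing to the infimum gives $\omega(A)\le f(A)$.

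The reverse inequality is the heart of the matter. Given $c<f(A)$ and $\epsilon>0$, I will inductively produce pairwise disjoint finite blocks $\tilde C_n\subset J$, $\tilde D_n\subset\Lambda$, operators $T_n\in A$, and functionals $\psi_n\in B(\ell^{q'},\ell^{p'})$ of norm at most $1$ supported on $\tilde C_n\times\tilde D_n$, with $\langle\psi_n,T_n\rangle>c-\epsilon$. At each step, the hypothesis $f(A)>c$ furnishes $T_n\in A$ with $\|(I-P_{C_{n-1}})T_n(I-Q_{D_{n-1}})\|_N>c$; nuclear-norm approximation picks the fresh block disjoint from all earlier ones, and finite-dimensional trace duality supplies $\psi_n$. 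Two structural estimates, each valid precisely because $p\le q$ (equivalently $p'\ge q'$), carry the remainder of the argument: the block-diagonal operator-norm bound $\bigl\|\sum_n\alpha_n\psi_n\bigr\|_{B(\ell^{q'},\ell^{p'})}\le\sup_n|\alpha_n|$, proved by a power-mean computation using disjointness of the supports, and the absolute-summability estimate $\sum_n\|P_{\tilde C_n}T_0Q_{\tilde D_n}\|_N\le\|T_0\|_N$ for every $T_0\in Z$, proved by H\"older together with the inclusion $\ell^p\subset\ell^q$ valid for counting measure. After a diagonal pass to a subsequence making the matrix entries of $T_m$ converge pointwise, the tail functionals $\phi_N=\sum_{n\ge N}\psi_n\in Z^*$, combined with a Banach--Alaoglu/ultrafilter cluster-point analysis, force $\|\Phi-T_0\|_{Z^{**}}\ge c-2\epsilon$ for every $w^*$-cluster point $\Phi$ of $(T_n)$ and every $T_0\in Z$, hence $\dist(\Phi,Z)\ge c-2\epsilon$ and $\wck[Z]{A}\ge c-2\epsilon$.

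Part (c) is a routine finite-dimensional approximation: if $\sup_{T\in A}\|T-P_CTQ_D\|_N<c$ then $\{P_CTQ_D:T\in A\}$ is a bounded subset of the finite-dimensional space $N(\ell^q(D),\ell^p(C))$ and is therefore norm-compact, giving $\chi(A)<c$; for the lower bound, given $c>\chi(A)$ take a finite $\epsilon$-net $\{S_i\}\subset Z$ for $A$, enlarge $C,D$ until $\|S_i-P_CS_iQ_D\|_N<\epsilon$ for every $i$, and use $\|T-P_CTQ_D\|_N\le2\|T-S_i\|_N+\|S_i-P_CS_iQ_D\|_N$ for the net point $S_i$ closest to $T\in A$ to obtain the factor of two. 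The main obstacle throughout is the reverse inequality in part (b): the off-diagonal pairings $\langle\psi_n,T_m\rangle$ with $m\ne n$ are not controlled by membership in $A$, so one cannot test cluster points against $\psi_n$ individually. The tail functional $\phi_N$ is the device that bundles the diagonal contributions into a single norm-one element of $Z^*$, and the two $p\le q$-dependent estimates above are exactly what makes that bundle operator-norm-bounded and forces $\langle\phi_N,T_0\rangle\to0$ for each fixed nuclear $T_0$, which is precisely what the cluster-point distance calculation needs.
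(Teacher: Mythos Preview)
Your treatment of parts (a) and (c), and of the inequality $\omega(A)\le f(A)$ in (b), is correct and essentially identical to the paper's: Pitt's theorem plus trace duality for (a), the reflexive subspace $\{T:(I-P_C)T(I-Q_D)=0\}$ for the upper bound on $\omega$, and the finite-dimensional net argument with the triangle inequality for (c). Your two ``structural estimates'' are precisely the paper's Lemma~\ref{L:Kdiag}(a) and (b), and your inductive construction of $T_n$, $\tilde C_n$, $\tilde D_n$ matches the paper's construction of $T_n$, $C'_n$, $D'_n$.

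The gap is in the endgame of the reverse inequality $f(A)\le\wck[Z]{A}$. Testing a weak$^*$ cluster point $\Phi$ of $(T_m)$ against the tail functional $\phi_N=\sum_{n\ge N}\psi_n$ does not give the claimed lower bound, because for $m\ge N$ one has
\[
\langle\phi_N,T_m\rangle=\langle\psi_m,T_m\rangle+\sum_{n>m}\langle\psi_n,T_m\rangle+\sum_{N\le n<m}\langle\psi_n,T_m\rangle,
\]
and while the first term exceeds $c-\varepsilon$ and the second can be made small via the absolute-summability estimate, the \emph{lower-triangular} block $\sum_{N\le n<m}\langle\psi_n,T_m\rangle$ is uncontrolled. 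Nothing in the hypotheses on $A$ prevents, say, $\langle\psi_{m-1},T_m\rangle\approx-(c-\varepsilon)$, in which case $\langle\phi_N,T_m\rangle\approx0$ for all large $m$ and hence $\Phi(\phi_N)$ can vanish. Passing to a subsequence along which the matrix entries converge pointwise does not cure this: it forces $\langle\psi_n,T_m\rangle\to a_n$ for each fixed $n$, but the travelling diagonal term $\langle\psi_m,T_m\rangle$ prevents any dominated-convergence conclusion for $\langle\phi_N,T_m\rangle$ as $m\to\infty$. So the cluster-point distance calculation does not go through as described.

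The paper sidesteps exactly this obstacle by \emph{projecting} rather than testing: the norm-one block-diagonal projection $\Psi(T)=\sum_n P_{\tilde C_n}TQ_{\tilde D_n}$ (your second structural estimate shows this is a contraction onto an $\ell^1$-sum of finite-dimensional spaces) satisfies $\wck[Z]{A}\ge\wck[\Psi(Z)]{\Psi(A)}$ by $1$-complementation, and in the $\ell^1$-sum the formula $\wck{\cdot}=\inf_F\sup_x\sum_{n\notin F}\|x_n\|$ is available as a black box (the paper cites \cite[Lemma~7.2]{qdpp}). The projection kills the off-diagonal pairings outright, which is why the lower-triangular terms never appear. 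If you want to keep your functional-analytic endgame, the cleanest fix is to replace the tail functional by this projection and reduce to the $\ell^1$-sum statement; attempting to control the lower-triangular block directly by further subsequence extraction does not obviously succeed.
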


This theorem will be proved in the next section. The assertion (b) can be viewed as a non-commutative version of Theorem~B, as the nuclear operators are, in a sense, a noncommutative version of $\ell^1(\Gamma)$ (this is illustrated by Lemma~\ref{L:Kdiag}(b) below). The assertion (c) is easy and is included 
for the sake of completeness. It also illustrates the difference between the non-commutative and commutative cases. Indeed, unlike the space of nuclear operators, the space $\ell^1(\Gamma)$ has the Schur property (hence $\omega=\chi$ in this space). The comparison of the formulas in (b) and (c) reveals the different nature of the non-commutative case. Furhter, the assertion (c) does not provide a precise formula for $\chi$, only the two inequlaties. This pair of inequalities cannot be easily replaced by an equality by Example~\ref{ex}. We do not know whether the constant $2$ is optimal.

The second main result deals with atomic von Neumann algebras. Recall that
a \emph{von Neumann algebra} is a $*$-subalgebra $M\subset L(H)$ of the space of bounded linear operators on a complex Hilbert space $H$ which is equal to the double commutant of itself. An element $p\in M$ is a \emph{projection} if $p^*=p=p^2$ (i.e., if it is an orthogonal projection
when considered as an operator). A projection $p\in M$ is called \emph{atomic} if the subalgebra $pMp$ has dimension one or, what is the same, if $p$ is an atom in the projection lattice $P(M)$. A von Neumann algebra $M$ is called \emph{atomic} if the unit of $M$ is the sum of atomic projections.

\begin{thm}\label{T:main-vN}
Let $M$ be an atomic von Neumann algebra and $A\subset M_*$ a bounded set.  Then 
$$\omega(A)= \wk[M_*]A=\wck[M_*]A.$$
Moreover, let  $(p_\alpha)_{\alpha\in\Lambda}$ and $(q_j)_{j\in J}$ be two families (possibly but not necessarily the same) of pairwise orthogonal atomic projections, both with sum one.  
Then all the measures of weak non-compactness are equal to 
$$\inf \left\{ \sup_{\varphi\in A} \norm{(1-\sum_{\alpha\in C}p_\alpha)\varphi (1-\sum_{j\in D}q_j)} \setsep C\subset \Lambda,D\subset J\mbox{ finite}\right\} $$
\end{thm}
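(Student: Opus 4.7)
The plan is to reduce via the structure theorem for atomic von Neumann algebras, establish the upper bound by a reflexive-subspace argument, and the lower bound by a rectangle-diagonal construction in the spirit of Theorem~\ref{T:main-N}(b).

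First I recall that an atomic von Neumann algebra $M$ admits the decomposition $M = \bigoplus_{i}^{\ell^\infty} B(H_i)$, a $w^*$-direct sum of type~I factors indexed by the minimal central projections, and correspondingly $M_* = \bigoplus_{i}^{\ell^1} N(H_i)$, the $\ell^1$-sum of the trace class spaces. The two atomic projection families split accordingly: $\Lambda = \bigsqcup_i \Lambda_i$, $J = \bigsqcup_i J_i$, with $(p_\alpha)_{\alpha\in\Lambda_i}$ and $(q_j)_{j\in J_i}$ giving orthonormal bases of rank-one projections in $B(H_i)$.

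For the upper bound on $\omega(A)$, fix finite $C\subset\Lambda$, $D\subset J$ and decompose $\varphi = \bigl(P_C\varphi + (1-P_C)\varphi Q_D\bigr) + (1-P_C)\varphi(1-Q_D)$. The main part belongs to $P_C M_* + (1-P_C) M_* Q_D$; since $P_C H$ and $Q_D H$ are finite-dimensional, each of these two subspaces is isomorphic to a finite direct sum of Hilbert spaces, hence reflexive. Consequently the image of $A$ under the projection $\varphi \mapsto \varphi - (1-P_C)\varphi(1-Q_D)$ is relatively weakly compact, giving $\omega(A) \le \sup_{\varphi \in A}\|(1-P_C)\varphi(1-Q_D)\|$; taking the infimum over finite $C, D$ yields the upper bound.

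For the lower bound $\wck[M_*]{A} \ge $ (infimum), suppose the infimum exceeds $c>0$. I would build iteratively a sequence $\varphi_n \in A$ and increasing finite $C_n \subset \Lambda$, $D_n \subset J$ (with $C_0 = D_0 = \emptyset$) such that, writing $\tilde P_n = P_{C_n \setminus C_{n-1}}$ and $\tilde Q_n = Q_{D_n \setminus D_{n-1}}$, one has $\|\tilde P_n \varphi_n \tilde Q_n\| > c - 1/n$. The construction uses the hypothesis at stage $n-1$ together with the norm convergence $P_C \psi Q_D \to \psi$ for any $\psi$ in the trace class. Duality on the finite-dimensional corner $(\tilde Q_n M \tilde P_n)^* = \tilde P_n M_* \tilde Q_n$ then provides $y_n \in \tilde Q_n M \tilde P_n$ with $\|y_n\| \le 1$ and $|\varphi_n(y_n)| > c - 1/n$; these $y_n$ have pairwise orthogonal initial and final projections. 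A key estimate is $\sum_n \|\tilde P_n \varphi \tilde Q_n\| \le \|\varphi\|$ for $\varphi \in M_*$, proven from the singular value decomposition of trace class operators and Cauchy--Schwarz, and forcing $\varphi(y_n) \to 0$ summably for each $\varphi \in M_*$. Passing to a suitable subsequence via diagonal extraction---feasible because each $\tilde P_m M_* \tilde Q_m$ is finite-dimensional---and invoking Banach--Alaoglu produces a $w^*$-cluster point $\Psi \in M^*$ of $(\varphi_n)$ with $|\Psi(y_n)| \ge c - \epsilon$ for infinitely many $n$, whence $\dist(\Psi, M_*) \ge c - \epsilon$.

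The principal obstacle is this last step: ensuring that the cluster point $\Psi$ inherits the diagonal lower bound $|\varphi_n(y_n)| > c - 1/n$. Because $\Psi(y_n)$ is determined by the asymptotic behavior of $(\varphi_k(y_n))_k$ for $k \ne n$ rather than the value at $k = n$, the diagonal extraction must be carefully interleaved with the iterative construction so as to suppress off-diagonal contributions---handled by reducing the in-factor behavior to Theorem~\ref{T:main-N}(b) within each $N(H_i)$ and the cross-factor behavior to a Theorem~B-style argument for mass escaping in the $\ell^1$-index. Combining the upper and lower bounds with the general inequalities $\wck[M_*]{A} \le \wk[M_*]{A} \le \omega(A)$ then yields equality of all three measures with the claimed infimum.
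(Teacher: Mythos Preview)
Your upper bound is fine, but the lower bound sketch has a real gap, and more importantly you are missing the structural shortcut the paper exploits.

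The paper does \emph{not} decompose $M_*$ as $\bigoplus_i^{\ell^1} N(H_i)$ and then redo the argument of Theorem~\ref{T:main-N}. Instead it represents $M$ (acting via the two given atomic families as two orthonormal bases) inside a \emph{single} space $L(\ell^2(\Lambda),\ell^2(J))$, observes that the block-diagonal map $\Phi(T)=\sum_\gamma q_{V_\gamma}Tp_{Z_\gamma}$ from Lemma~\ref{L:Kdiag} is a norm-one projection on $K(\ell^2(\Lambda),\ell^2(J))$ whose bidual has range exactly $M$, and concludes that $M_*$ sits as a $1$-complemented subspace $\Phi^*(N(\ell^2(J),\ell^2(\Lambda)))$ of the nuclear operators. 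Lemma~\ref{L:1-compl} then transfers all three measures, \emph{and} the explicit formula, directly from Theorem~\ref{T:main-N}(b) with no further work. The entire lower bound --- the whole iterative construction, the block-diagonal projection, the $\ell^1$-sum estimate --- is already packaged inside Theorem~\ref{T:main-N}(b) and does not need to be replayed.

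Your route runs into trouble precisely at the point you flag. Getting $|\varphi_n(y_n)|>c-1/n$ on the diagonal says nothing about any weak$^*$ cluster point $\Psi$, since $\Psi(y_n)$ depends on the tail $(\varphi_k(y_n))_{k>n}$. In the paper's proof of Theorem~\ref{T:main-N}(b) this obstacle is bypassed not by a cluster-point argument at all, but by projecting $A$ via the block-diagonal map onto an $\ell^1$-sum of \emph{finite-dimensional} (hence reflexive) blocks and invoking Lemma~\ref{L:l1suma}. Your suggested bifurcation into ``in-factor'' and ``cross-factor'' parts does not cleanly reduce to this, because $M_*=\bigoplus_i^{\ell^1}N(H_i)$ is an $\ell^1$-sum of \emph{non-reflexive} summands, so Lemma~\ref{L:l1suma} does not apply to that decomposition. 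The fix is exactly the paper's: abandon the factor decomposition and embed $M_*$ in one $N(\ell^2)$ so that Theorem~\ref{T:main-N}(b) applies wholesale.
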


We see that the formulas in two main results have similar form. In fact, the two results are related -- both of them cover the case of $N(H)$, the space of nuclear operators on a complex Hilbert space (which is the predual of the atomic von Neumann algebra $L(H)$).

Theorem~\ref{T:main-vN} will be proved in the last section as a consequence of Theorem~\ref{T:main-N}.

\section{The case of nuclear operators}

The aim of this section is to prove Theorem~\ref{T:main-N}. To this end we 
will need several results on nuclear operators. Recall that, given Banach spaces $X$ and $Y$, $N(X,Y)$ is the space of nuclear operators from $X$ to $Y$ equipped with the nuclear norm (see Section~\ref{s:main}), $L(X,Y)$ is the space of bounded linear operators from $X$ to $Y$ equipped with the operator norm and $K(X,Y)$ is the subspace of $L(X,Y)$ formed by compact operators. The first result we need is the following one on trace duality.
It is essentially well known as it is clear from the below-given proof.
We will need mainly the assertion (a). For $p\in(1,\infty)$ we will denote by $p^*$ its dual exponent, i.e., the number $p^*\in(1,\infty)$ satisfying $\frac1p+\frac1{p^*}=1$.

\begin{prop}\label{L:dualita} Let $p,q\in(1,\infty)$ and let $J$ and $\Lambda$ be nonempty sets.
\begin{itemize}
	\item[(a)] The dual of $N(\ell^p(J),\ell^q(\Lambda))$ is canonically isometric to $L(\ell^q(\Lambda),\ell^p(J))$, where the duality is given by
	$$\ip{T}{\sum_{n=1}^\infty x_n^*(\cdot) y_n}= \sum_{n=1}^\infty x_n^*(Ty_n)$$
    for $T\in L(\ell^q(\Lambda),\ell^p(J))$ and $\sum_{n=1}^\infty x_n^*(\cdot) y_n\in N(\ell^p(J),\ell^q(\Lambda))$.
  \item[(b)] The dual of $K(\ell^p(J),\ell^q(\Lambda))$ is canonically isometric to $N(\ell^q(\Lambda),\ell^p(J))$, where the duality is given by 
  $$\ip{\sum_{n=1}^\infty x_n^*(\cdot) y_n}{T}= \sum_{n=1}^\infty x_n^*(Ty_n)$$
  for $\sum_{n=1}^\infty x_n^*(\cdot) y_n\in N(\ell^p(J),\ell^q(\Lambda))$ and $T\in K(\ell^q(\Lambda),\ell^p(J))$.
  \item[(c)] The bidual of $K(\ell^p(J),\ell^q(\Lambda))$ is canonically isometric to $L(\ell^p(J),\ell^q(\Lambda))$ and the standard inclusion $K(\ell^p(J),\ell^q(\Lambda))\subset L(\ell^p(J),\ell^q(\Lambda))$ corresponds to the canonical emdedding to the bidual.
\end{itemize}
\end{prop}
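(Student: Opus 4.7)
The plan is to route all three parts through the standard projective/injective tensor product representations of nuclear and compact operators, exploiting the fact that $\ell^p(J)$ and $\ell^q(\Lambda)$ are reflexive and have the metric approximation property.

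For part (a), I would first identify $N(\ell^p(J),\ell^q(\Lambda))$ isometrically with the projective tensor product $\ell^{p^*}(J)\hat\otimes_\pi\ell^q(\Lambda)$ via the natural map $\sum_n x_n^*\otimes y_n\mapsto\sum_n x_n^*(\cdot)y_n$. The injectivity of this map (and hence the isometric identification) comes from the metric approximation property of $\ell^{p^*}(J)$ -- the finite-support projections form a net of norm-one projections converging strongly to the identity. The universal property of $\hat\otimes_\pi$ then yields
\[
N(\ell^p(J),\ell^q(\Lambda))^* \cong L(\ell^{p^*}(J),\ell^{q^*}(\Lambda)),
\]
and the reflexivity of both factors lets me apply the isometric transpose $T\mapsto T^*$ to land in $L(\ell^q(\Lambda),\ell^p(J))$. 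Tracing the pairing through these identifications on elementary tensors $x^*\otimes y$ gives exactly $x^*(Ty)$, and extending to general nuclear representations by continuity recovers the stated formula.

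For part (b), I would proceed in parallel using the injective tensor product: since $\ell^q(\Lambda)$ has the approximation property, $K(\ell^p(J),\ell^q(\Lambda))\cong\ell^{p^*}(J)\hat\otimes_\varepsilon\ell^q(\Lambda)$ isometrically. The standard duality $(\ell^{p^*}(J)\hat\otimes_\varepsilon\ell^q(\Lambda))^*=I(\ell^{p^*}(J),\ell^{q^*}(\Lambda))$ identifies the dual with the space of integral operators. Since $\ell^p(J)$ is reflexive, $\ell^{p^*}(J)$ has the Radon--Nikodym property, which forces integral operators into $\ell^{q^*}(\Lambda)$ to coincide isometrically with nuclear operators. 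An isometric transpose (again using reflexivity of both spaces) turns $N(\ell^{p^*}(J),\ell^{q^*}(\Lambda))$ into $N(\ell^q(\Lambda),\ell^p(J))$, and the pairing formula is verified on rank-one operators and extended by density and bilinearity.

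Part (c) is then a short deduction: combining (a) and (b) gives
\[
K(\ell^p(J),\ell^q(\Lambda))^{**}= N(\ell^q(\Lambda),\ell^p(J))^*\cong L(\ell^p(J),\ell^q(\Lambda)),
\]
and it remains to check on rank-one operators $\psi\otimes w\in K(\ell^p(J),\ell^q(\Lambda))$ that the canonical embedding $K\hookrightarrow K^{**}$ matches the inclusion $K\subset L$, which is immediate from comparing the two pairing formulas.

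The main obstacle in this plan is the identification of integral with nuclear operators in step (b); this is the only non-formal ingredient and depends on the Radon--Nikodym property of $\ell^{p^*}(J)$. If one wants to avoid invoking the integral--nuclear equivalence as a black box, an alternative is a direct trace-duality argument: define $\Phi\colon N(\ell^q(\Lambda),\ell^p(J))\to K(\ell^p(J),\ell^q(\Lambda))^*$ by the nuclear pairing, prove isometry using (a) together with a dominated-convergence argument showing that the finite-rank truncations $P_CTQ_D$ approximate any $T\in L$ in the weak$^*$ topology induced by $N$, and establish surjectivity by reconstructing the matrix $u_{jk}:=f(e_j^*\otimes e_k)$ for a given $f\in K^*$ and verifying it defines a nuclear operator of the correct norm.
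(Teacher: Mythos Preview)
Your proposal is correct and follows essentially the same route as the paper: both arguments identify $N(\ell^p(J),\ell^q(\Lambda))$ with the projective tensor product and $K(\ell^p(J),\ell^q(\Lambda))$ with the injective tensor product, then invoke the standard dualities together with the coincidence of integral and nuclear operators (the paper cites Grothendieck via reflexivity, you cite the Radon--Nikodym property of $\ell^{p^*}(J)$, which amounts to the same thing). The only cosmetic difference is that you pass through $L(\ell^{p^*}(J),\ell^{q^*}(\Lambda))$ and then transpose, whereas the paper lands directly in $L(\ell^q(\Lambda),\ell^p(J))$ by citing the appropriate form of the duality in Ryan.
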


\begin{proof} (a) By \cite[Corollary 4.8]{ryan} the space $N(\ell^p(J),\ell^q(\Lambda))$ coincides with the completed projective tensor product $\ell^{p^*}(J)\hat{\otimes}_\pi\ell^q(\Lambda)$. The dual of the projective tensor product is described in \cite[Section 2.2]{ryan}, we use the version given on p. 24. 

(b) By \cite[Corollary 4.13]{ryan} the space $K(\ell^p(J),\ell^q(\Lambda))$ coincides with the completed injective tensor product $\ell^{p^*}(J)\hat{\otimes}_\varepsilon\ell^q(\Lambda)$.
By \cite[p. 67]{ryan}  the dual of this injective tensor product is identified with the space of integral operators 
$I(\ell^q(\Lambda),\ell^p(J))$ equipped with the integral norm (see \cite[p. 62]{ryan}).  By \cite[Corollary 4.17]{ryan} the space $N(\ell^q(\Lambda),\ell^p(J))$ is a closed subspace of $I(\ell^q(\Lambda),\ell^p(J))$ and the nuclear norm coincides with the integral one. Finally, as the spaces $\ell^q(\Lambda)$ and $\ell^p(J)$ are reflexive, an old result of A.Grothendieck
\cite{gro-mem} recalled in \cite[p. 123]{absolutelysumming} implies that $N(\ell^q(\Lambda),\ell^p(J))=I(\ell^q(\Lambda),\ell^p(J))$.

(c) This assertions follows immediately from (a) and (b).
\end{proof}

Using Pitt's theorem \cite[Theorem 4.23]{ryan} the previous proposition implies the following result (cf. \cite[Corollary 4.24]{ryan}):

\begin{cor}\label{c:pitt} Let $p,q\in(1,\infty)$ and let $J$ and $\Lambda$ be infinite sets. The following assertions are equivalent:
\begin{itemize}
	\item[(i)] $K(\ell^p(J),\ell^q(\Lambda))$ is reflexive.
	\item[(ii)] $N(\ell^q(\Lambda),\ell^p(J))$ is reflexive.
	\item[(iii)] $p>q$.
\end{itemize}
\end{cor}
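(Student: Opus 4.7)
The plan is to reduce all three conditions to the simple question of whether every bounded operator between the relevant $\ell^p$ and $\ell^q$ spaces is compact, and then invoke Pitt's theorem for the trichotomy.

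First I would establish (i)$\iff$(ii) by a soft duality argument. A Banach space is reflexive if and only if its dual is reflexive. By Proposition~\ref{L:dualita}(b) the dual of $K(\ell^p(J),\ell^q(\Lambda))$ is $N(\ell^q(\Lambda),\ell^p(J))$, so the two reflexivity statements are equivalent with no further work.

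Next I would prove (iii)$\Rightarrow$(i). Proposition~\ref{L:dualita}(c) identifies the bidual of $K(\ell^p(J),\ell^q(\Lambda))$ with $L(\ell^p(J),\ell^q(\Lambda))$ in such a way that the canonical embedding into the bidual is the inclusion $K\subset L$. Hence reflexivity of $K(\ell^p(J),\ell^q(\Lambda))$ amounts to the equality $K(\ell^p(J),\ell^q(\Lambda))=L(\ell^p(J),\ell^q(\Lambda))$. When $p>q$, Pitt's theorem (cited in the paper) asserts precisely that every bounded linear operator from $\ell^p(J)$ to $\ell^q(\Lambda)$ is compact, which gives this equality and therefore (i).

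Finally I would prove the contrapositive (i)$\Rightarrow$(iii): if $p\le q$ then $K\ne L$. Since $J$ and $\Lambda$ are infinite, pick countably infinite subsets $J_0\subset J$ and $\Lambda_0\subset \Lambda$ together with a bijection $\pi\colon J_0\to\Lambda_0$, and define $T\colon\ell^p(J)\to\ell^q(\Lambda)$ by $(Tx)_{\pi(j)}=x_j$ for $j\in J_0$ and $(Tx)_\alpha=0$ for $\alpha\notin\Lambda_0$. Since $p\le q$, the inclusion $\ell^p\subset\ell^q$ on the countable coordinate set $J_0$ is norm-decreasing, so $T$ is bounded; but $T$ sends the canonical basis vectors $e_j$ ($j\in J_0$) to an orthonormal-like family in $\ell^q(\Lambda)$ with no norm-convergent subsequence, so $T$ is not compact. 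Thus $T\in L\setminus K$, and the inclusion $K\subset L$ is proper, so $K$ is not reflexive.

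The only mildly technical point is the construction in the last step, and even that is routine; the main conceptual content is simply combining the duality statements in Proposition~\ref{L:dualita} with Pitt's theorem.
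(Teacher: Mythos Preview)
Your proof is correct and follows exactly the route the paper indicates: the paper's entire proof is the one-line remark that the corollary follows from Proposition~\ref{L:dualita} together with Pitt's theorem (citing \cite[Theorem~4.23 and Corollary~4.24]{ryan}), and you have simply spelled out the details, including the easy construction of a non-compact operator when $p\le q$ that the paper leaves implicit.
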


Recall that for $C\subset J$ and $D\subset \Lambda$ we have defined in the statement of Theorem~\ref{T:main-N} projections $P_C$ and $Q_D$. Further,
define $\Phi_{C,D}\in L(K(\ell^p(J),\ell^q(\Lambda)))$ by 
$\Phi_{C,D}(T)=Q_DTP_C$, $T\in K(\ell^p(J),\ell^q(\Lambda))$. Then the following holds:

\begin{lemma}\label{L:PhiCD} For any nonempty sets $C\subset J$ and $D\subset \Lambda$ the following holds:
\begin{itemize}
	\item[(a)] $\Phi_{C,D}$ is a norm-one projection on $K(\ell^p(J),\ell^q(\Lambda))$. If $C$ and $D$ are finite, then it is a finite rank operator.
	\item[(b)] $\Phi_{C,D}^*(T)=P_CTQ_D$ for $T\in N(\ell^q(\Lambda),\ell^p(J))= K(\ell^p(J),\ell^q(\Lambda))^*$.
	\item[(c)] $\Phi_{C,D}^{**}(T)=Q_DTP_C$ for $T\in L(\ell^p(J),\ell^q(\Lambda))= K(\ell^p(J),\ell^q(\Lambda))^{**}$.
\end{itemize}
\end{lemma}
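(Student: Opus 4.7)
The plan is to verify each of the three assertions by direct computation, relying throughout on the trace duality established in Proposition~\ref{L:dualita} together with the obvious facts that $P_C$ and $Q_D$ are norm-one projections on $\ell^p(J)$ and $\ell^q(\Lambda)$ respectively. Part (a) is the easiest: the identity $\Phi_{C,D}^2=\Phi_{C,D}$ follows from $P_C^2=P_C$ and $Q_D^2=Q_D$, and the estimate $\norm{\Phi_{C,D}(T)}\le\norm{Q_D}\cdot\norm{T}\cdot\norm{P_C}=\norm{T}$ shows $\norm{\Phi_{C,D}}\le1$, with equality attained already on any rank-one operator $x^*(\cdot)y$ with $x^*$ supported in $C$ and $y$ supported in $D$. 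When $C$ and $D$ are finite, $\Phi_{C,D}$ factors through the finite-dimensional space $L(\ell^p(C),\ell^q(D))$, so it has finite rank.

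For (b), I would fix $S\in N(\ell^q(\Lambda),\ell^p(J))$ and pick a nuclear representation $S=\sum_{n=1}^\infty x_n^*(\cdot)\, y_n$ with $x_n^*\in\ell^q(\Lambda)^*=\ell^{q^*}(\Lambda)$ and $y_n\in\ell^p(J)$. For any $T\in K(\ell^p(J),\ell^q(\Lambda))$, the pairing from Proposition~\ref{L:dualita}(b) gives
\[
\ip{S}{\Phi_{C,D}(T)}=\sum_{n=1}^\infty x_n^*(Q_D T P_C y_n)=\sum_{n=1}^\infty (Q_D^* x_n^*)\bigl(T(P_C y_n)\bigr),
\]
where $Q_D^*$ acts on $\ell^{q^*}(\Lambda)$ as the canonical projection with the same support $D$. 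On the other hand, the operator $P_C S Q_D$ admits the nuclear representation $\sum_{n=1}^\infty (Q_D^* x_n^*)(\cdot)\,(P_C y_n)$, so its pairing with $T$ equals exactly the right-hand side above. Hence $\Phi_{C,D}^*(S)=P_C S Q_D$, which is (b).

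Part (c) is then obtained by iterating the same move using the duality from Proposition~\ref{L:dualita}(a), which identifies $N(\ell^q(\Lambda),\ell^p(J))^*$ with $L(\ell^p(J),\ell^q(\Lambda))$ via the trace pairing of the same form. Given $T\in L(\ell^p(J),\ell^q(\Lambda))$ and $S=\sum x_n^*(\cdot)y_n\in N(\ell^q(\Lambda),\ell^p(J))$, combining (b) with the definition of the bidual map yields
\[
\ip{\Phi_{C,D}^{**}(T)}{S}=\ip{T}{P_C S Q_D}=\sum_{n=1}^\infty (Q_D^*x_n^*)\bigl(T(P_C y_n)\bigr)=\sum_{n=1}^\infty x_n^*(Q_D T P_C y_n),
\]
which is exactly $\ip{Q_D T P_C}{S}$; thus $\Phi_{C,D}^{**}(T)=Q_D T P_C$.

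I do not expect a genuine obstacle here: the whole argument is a bookkeeping exercise in the trace duality. The only point that requires any care is the consistent identification of the adjoints of the coordinate projections on $\ell^p$ and $\ell^q$ with the analogous coordinate projections on the dual spaces, and keeping straight on which side each of $P_C$, $Q_D$ acts when one passes between $T$, $S$, $\Phi_{C,D}(T)$, and $\Phi_{C,D}^*(S)$.
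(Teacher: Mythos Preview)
Your proposal is correct and follows essentially the same approach as the paper: part (a) is declared obvious there (you add a little more detail), and parts (b) and (c) are proved by exactly the same trace-duality computation you outline, expanding the nuclear representation and moving $P_C$, $Q_D$ across the pairing.
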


\begin{proof} The assertion (a) is obvious. To show (b) fix $S\in K(\ell^p(J),\ell^q(\Lambda))$ and $T=\sum_{n=1}^\infty x_n^*(\cdot)y_n\in N(\ell^q(\Lambda),\ell^p(J))$. Then
$$\begin{aligned}
\ip{\Phi_{C,D}^*T}{S}&=\ip{T}{\Phi_{C,D}S}=\ip{T}{Q_DSP_C}
=\sum_{n=1}^\infty x_n^*(Q_DSP_Cy_n)\\&=\sum_{n=1}^\infty Q_D^*x_n^*(SP_Cy_n)=\ip{\sum_{n=1}^\infty Q_D^*x_n^*(\cdot)P_Cy_n}{S}\end{aligned}$$
and
$$\sum_{n=1}^\infty Q_D^*x_n^*(x)P_Cy_n=\sum_{n=1}^\infty x_n^*(Q_Dx)P_Cy_n=P_CTQ_Dx$$
for $x\in\ell^q(\Lambda)$.
This completes the proof of (b). The assertion (c) follows by the same computation.
\end{proof}

The next lemma shows how the projections $\Phi_{C,D}$ can be used to approximate identity by finite-rank operators in spaces of operators.
The important case is $1<p\le q<\infty$, but it holds also for $1<q<p<\infty$. In the latter case the assertion (c) is superfluous, as (a) gives a stronger result (due to Corollary~\ref{c:pitt}).

\begin{lemma}\label{L:Kaprox} Let $p,q\in(1,\infty)$ and let $J$ and $\Lambda$ be infinite sets. Let
$$\Sigma=\{ (C,D)\setsep C\subset J \mbox{ finite}, D\subset \Lambda\mbox{ finite}\}$$
be equipped with the partial order defined by $(C,D)\le (C',D')$ if $C\subset C'$ and $D\subset D'$. Then $\Sigma$ is an up-directed set. Moreover, the following assertions hold:
\begin{itemize}
	\item[(a)] The net $(\Phi_{C,D})_{(C,D)\in\Sigma}$ converges to the identity in the strong operator topology of $L(K(\ell^p(J),\ell^q(\Lambda)))$.
	\item[(b)] The net  $(\Phi^*_{C,D})_{(C,D)\in\Sigma}$ converges to the identity in the strong operator topology of $L(N(\ell^q(\Lambda),\ell^p(J)))$.
	\item[(c)] The net  $(\Phi^{**}_{C,D}(T))_{(C,D)\in\Sigma}$ weak$^*$-converges to $T$ for each $T\in L(\ell^p(J),\ell^q(\Lambda))$.
\end{itemize}
\end{lemma}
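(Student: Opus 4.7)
The partial order on $\Sigma$ is clearly directed by the pair-union rule $(C_1,D_1),(C_2,D_2)\mapsto(C_1\cup C_2,D_1\cup D_2)$. The preliminary fact I would invoke throughout is that, since $p,q,p^*,q^*\in(1,\infty)$ are finite, the projections $P_C$ and $Q_D$ converge to the identity in the strong operator topology on each of $\ell^p(J),\ell^{p^*}(J),\ell^q(\Lambda),\ell^{q^*}(\Lambda)$: every element of these spaces is the norm limit of its finitely supported truncations. My plan is to prove (a) directly from the compactness of $S$, then (b) by an estimate on a nuclear representation of $T$, and finally to deduce (c) from (b) through a routine duality argument.

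For (a), I would fix $S\in K(\ell^p(J),\ell^q(\Lambda))$ and split
\[
Q_D S P_C - S \;=\; Q_D S(P_C-I) + (Q_D-I)S,
\]
so that $\norm{Q_D S P_C - S}\le\norm{S(I-P_C)}+\norm{(I-Q_D)S}$. The compactness of $S$ makes $S(B_{\ell^p(J)})$ relatively compact in $\ell^q(\Lambda)$, and the strong-operator convergence $Q_D\to I$ is uniform on this compact set, giving $\norm{(I-Q_D)S}\to 0$. For the other term I pass to the adjoint: $\norm{S(I-P_C)}=\norm{(I-P_C^*)S^*}$, and since $S^*\colon\ell^{q^*}(\Lambda)\to\ell^{p^*}(J)$ is compact while $P_C^*\to I$ strongly on $\ell^{p^*}(J)$, the same uniform-on-compacts argument applies.

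For (b), Lemma~\ref{L:PhiCD}(b) yields $\Phi^*_{C,D}(T)=P_C T Q_D$, and from $T-P_CTQ_D=(I-P_C)T+P_C T(I-Q_D)$ together with $\norm{P_C}\le 1$ I obtain
\[
\norm{T-P_C T Q_D}_N\le\norm{(I-P_C)T}_N+\norm{T(I-Q_D)}_N.
\]
I then fix a nuclear representation $T=\sum_n x_n^*(\cdot)\,y_n$ with $\sum_n\norm{x_n^*}\norm{y_n}<\infty$ and note that $(I-P_C)T=\sum_n x_n^*(\cdot)(I-P_C)y_n$ and $T(I-Q_D)=\sum_n ((I-Q_D^*)x_n^*)(\cdot)\,y_n$, so the two nuclear norms above are bounded by $\sum_n\norm{x_n^*}\norm{(I-P_C)y_n}$ and $\sum_n\norm{(I-Q_D^*)x_n^*}\norm{y_n}$ respectively. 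For each fixed $n$ the individual summands tend to $0$ along $\Sigma$, and both series are termwise dominated by the summable $2\sum_n\norm{x_n^*}\norm{y_n}$; splitting the sum into a tail (uniformly controlled by the majorant) and a finite initial segment (where each term eventually becomes small) then drives both bounds to zero.

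For (c), I would identify $K(\ell^p(J),\ell^q(\Lambda))^{**}$ with $L(\ell^p(J),\ell^q(\Lambda))$ via Proposition~\ref{L:dualita}(c), so that the $w^*$-topology on $L$ is generated by the predual $N(\ell^q(\Lambda),\ell^p(J))$. For any $T\in L(\ell^p(J),\ell^q(\Lambda))$ and $S\in N(\ell^q(\Lambda),\ell^p(J))$ one has $\ip{\Phi^{**}_{C,D}T}{S}=\ip{T}{\Phi^*_{C,D}S}\to\ip{T}{S}$, the convergence being provided by (b), which gives $\Phi^*_{C,D}S\to S$ in the norm of $N$. I expect the technical heart to be the directed-net dominated-convergence step in (b): for a fixed $(C,D)$ the individual terms $\norm{x_n^*}\norm{(I-P_C)y_n}$ need not be uniformly small in $n$, so one cannot naively interchange limit and sum; the tail/prefix split is exactly what resolves this.
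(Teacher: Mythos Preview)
Your proof is correct and follows the same strategy as the paper's: compactness plus uniform strong-operator convergence on compacts for (a), a nuclear representation with a tail/prefix split for (b), and duality for (c). Your two-term splittings in (a) and (b) are in fact slightly cleaner than the paper's four-term estimates, since each summand depends on only one of $C$ or $D$ and is monotone in it, making the net convergence immediate and sparing you the paper's extra step of re-estimating for all larger $(C',D')$.
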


\begin{proof} It is clear that $\Sigma$ is an up-directed set. So, the limits used in the statements (a)--(c) have sense.

(a) Let $T\in K(\ell^p(J),\ell^q(\Lambda))$ and $\varepsilon>0$. 
Since $T$ is compact, there is a finite set $F\subset T(B_{\ell^p(J)})$ such that $\dh(T(B_{\ell^p(J)}),F)<\frac{\varepsilon}{6}$. Further, find $D\subset \Lambda$ finite such that
$\norm{y-Q_{D}y}<\frac{\varepsilon}{6}$ for $y\in F$. Then $\norm{Q_{D}T-T}\le\frac{\varepsilon}2$. Indeed, given $x\in B_{\ell^p(J)}$ there is $y\in F$ with $\norm{Tx-y}<\frac{\varepsilon}{6}$. Then
$$\norm{(T-Q_{D}T)x}\le \norm{Tx-y}+\norm{y-Q_{D}y}+\norm{Q_{D}(y-Tx)}<\frac{\varepsilon}{6}+\frac{\varepsilon}{6}+\frac{\varepsilon}{6}=\frac{\varepsilon}{2}.$$
Now, $Q_{D}T\in K(\ell^p(J),\ell^q(\Lambda))$, hence the adjoint mapping $(Q_{D}T)^*$ belongs to $K(\ell^{q^*}(\Lambda),\ell^{p^*}(J))$. The above argument can be applied to $(Q_{D}T)^*$ and we can find a finite set $C\subset J$ such that 
$\norm{(Q_{D}T)^*-P_{C}^*(Q_{D}T)^*}\le\frac{\varepsilon}2$. Then clearly $\norm{T-Q_{D}TP_{C}}\le\varepsilon$.
Finally, if $(C',D')\in\Sigma$ with $(C,D)\le(C',D')$, then
$$\begin{aligned}\norm{T-Q_{D'}TP_{C'}}&\le \norm{T-Q_{D}TP_{C}}+\norm{Q_{D}TP_{C}-Q_{D'}TP_{C'}}
\\&=\norm{T-Q_{D}TP_{C}} + \norm{Q_{D'}(Q_{D}TP_{C}-T)P_{C'}}
\le2\varepsilon,\end{aligned}$$
which completes the proof of the convergence.

(b) We will use the description of $\Phi^*_{C,D}$ given in Lemma~\ref{L:PhiCD}(b). Fix $T\in N(\ell^q(\Lambda),\ell^p(J))$
and $\varepsilon>0$. By the definition of nuclear operators there are sequences $(x_n)$ in $\ell^p(J)$ and $(y_n^*)$ in $\ell^{q^*}$ such that $\sum_{n=1}^\infty\norm{x_n}\cdot\norm{y_n^*}<\infty$ and 
$$Ty=\sum_{n=1}^\infty y_n^*(y) x_n, \quad y\in\ell^q(\Lambda).$$
Fix $N\in\en$ such that $\sum_{n>N}\norm{x_n}\cdot\norm{y_n^*}<\varepsilon$. Further, we can find a finite set $C\subset J$ such that $\sum_{n=1}^N \norm{y_n^*}\cdot\norm{x_n-P_C x_n}<\varepsilon$. Finally, we choose a finite set $D\subset\Lambda$
such that $\sum_{n=1}^N \norm{y_n^*-Q_D^* y_n}\cdot\norm{P_C x_n}<\varepsilon$. Then
$$\begin{aligned}\norm{T-P_C T Q_D}_N &= \norm{\sum_{n=1}^\infty y_n^*(\cdot)x_n-\sum_{n=1}^\infty Q_D^*y_n^*(\cdot)P_C x_n}_N
\\&\le \norm{\sum_{n>N}y_n^*(\cdot)x_n}_N + \norm{\sum_{n>N} Q_D^*y_n^*(\cdot)P_C x_n}_N
\\&\qquad + 
\norm{\sum_{n=1}^N y_n^*(\cdot)(x_n-P_C x_n)}_N+\norm{\sum_{n=1}^N (y_n^*- Q_D^*y_n^*)(\cdot)P_C x_n}_N
\\&\le \sum_{n>N}\norm{y_n^*}\norm{x_n} + \sum_{n>N} \norm{Q_D^*y_n^*}\norm{P_C x_n} \\&\qquad+ 
\sum_{n=1}^N \norm{y_n^*}\norm{x_n-P_C x_n}+\sum_{n=1}^N \norm{y_n^*- Q_D^*y_n^*}\norm{P_C x_n}
<4\varepsilon.\end{aligned}$$
Finally, if $(C',D')\in\Sigma$ with $(C,D)\le(C',D')$, then we deduce, similarly as in the proof of (a), that $\norm{Q_{D'}TP_{C'}-T}_N<8\varepsilon$, which completes the proof of the convergence.

(c) This follows easily from (b): Fix  $T\in L(\ell^p(J),\ell^q(\Lambda))$. Then for each $S\in N(\ell^q(\Lambda),\ell^p(J))$
we have
$$\ip{\Phi_{C,D}^{**}T}{S}=\ip{T}{\Phi_{C,D}S}\overset{(C,D)\in\Sigma}{\longrightarrow} \ip{T}{S}.$$
\end{proof}

The assertion (b) of the following lemma says, roughly speaking, that block-diagonal nuclear operators have an $\ell^1$ structure. This will be used as an essential step to prove Theorem~\ref{T:main-N}. The assertion (a) shows that block-diagonal bounded operators have an $\ell^\infty$ structure and, moreover,
block diagonal compact operators have a $c_0$ structure. It is an interesting counterpart of (b) and, moreover, it is used in the proof of (b).
The assertion (a) will be further used in the next section in the proof of Theorem~\ref{T:main-vN}.

\begin{lemma}\label{L:Kdiag} Let $1<p\le q<\infty$. Let $\Gamma$ be a nonempty set, let $(C_\gamma)_{\gamma\in\Gamma}$ be a system of nonempty pairwise disjoint subsets of $J$ and $(D_\gamma)_{\gamma\in\Gamma}$ be a 
	system of finite nonempty pairwise disjoint subsets of $\Lambda$. 
\begin{itemize}
	\item[(a)] 	For each $T\in L(\ell^p(J),\ell^q(\Lambda))$ we have
	 $$\tilde T=\sum_{\gamma\in\Gamma} Q_{D_\gamma}TP_{C_\gamma}\in L(\ell^p(J),\ell^q(\Lambda)),$$
	 where the series converges unconditionally in the weak operator topology. Moreover,
	 $$\norm{\tilde T}=\sup_{\gamma\in\Gamma} \norm{Q_{D_\gamma}TP_{C_\gamma}}\le\norm{T}.$$
     If $T$ is moreover compact, then the series converges unconditionally in the operator norm and $\tilde{T}$ is compact as well.
	\item[(b)] For each $T\in N(\ell^q(\Lambda),\ell^p(\Lambda))$ we have 
	 $$\tilde T=\sum_{\gamma\in\Gamma} P_{C_\gamma}TQ_{D_\gamma}\in N(\ell^q(\Lambda),\ell^p(\Lambda)),$$
	 where the series converges absolutely in the nuclear norm. Moreover,
	 $$\norm{\tilde T}_N=\sum_{\gamma\in\Gamma} \norm{P_{C_\gamma}TQ_{D_\gamma}}_N\le\norm{T}_N.$$
\end{itemize}
	
\end{lemma}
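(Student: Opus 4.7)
\emph{Plan.} I will prove (a) directly, exploiting the pairwise disjointness of the supports $D_\gamma$ in $\ell^q(\Lambda)$ together with the hypothesis $p\le q$, and then (b) by a parallel direct estimate on an almost optimal nuclear representation. (An alternative route for (b) would be trace duality against part (a) via Proposition~\ref{L:dualita}(a), but the direct computation delivers the sharp equality of norms for free.)

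For (a), fix $x\in\ell^p(J)$ and a finite $F\subset\Gamma$. Since the $D_\gamma$ are pairwise disjoint, the vectors $Q_{D_\gamma}TP_{C_\gamma}x$ with $\gamma\in F$ have disjoint coordinate supports in $\ell^q(\Lambda)$, whence
\[
\Norm{\sum_{\gamma\in F}Q_{D_\gamma}TP_{C_\gamma}x}_q^{q}=\sum_{\gamma\in F}\norm{Q_{D_\gamma}TP_{C_\gamma}x}_q^{q}\le M^q\sum_{\gamma\in F}\norm{P_{C_\gamma}x}_p^{q},
\]
with $M=\sup_\gamma\norm{Q_{D_\gamma}TP_{C_\gamma}}$; the embedding $\ell^p\hookrightarrow\ell^q$ (valid for $p\le q$) bounds the last sum by $\norm{x}_p^{q}$. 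The analogous estimate, paired with a Hölder between $\ell^{q^*}$ and $\ell^q$ applied to any $y^*\in\ell^{q^*}(\Lambda)$, shows that $\sum_\gamma\langle y^*,Q_{D_\gamma}TP_{C_\gamma}x\rangle$ converges absolutely, which yields unconditional WOT convergence to an operator $\tilde T$ with $\norm{\tilde T}\le M$; the reverse inequality is immediate from $Q_{D_\gamma}\tilde TP_{C_\gamma}=Q_{D_\gamma}TP_{C_\gamma}$. If additionally $T$ is compact, the argument in the proof of Lemma~\ref{L:Kaprox}(a) produces, for any $\varepsilon>0$, a finite $D_0\subset\Lambda$ with $\norm{T-Q_{D_0}T}<\varepsilon$; pairwise disjointness forces $\Gamma_0=\{\gamma\setsep D_\gamma\cap D_0\ne\emptyset\}$ to be finite, and for $\gamma\notin\Gamma_0$ the identity $Q_{D_\gamma}=Q_{D_\gamma}(I-Q_{D_0})$ gives $\norm{Q_{D_\gamma}TP_{C_\gamma}}<\varepsilon$. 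Applying the supremum bound to the tail upgrades WOT to operator-norm convergence, and compactness of $\tilde T$ is inherited as a norm limit of finite-rank operators (the $D_\gamma$ being finite).

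For (b), pick a nuclear representation $T=\sum_n y_n^*(\cdot)\,x_n$ with $\sum_n\norm{y_n^*}_{q^*}\norm{x_n}_p\le\norm{T}_N+\varepsilon$. Restriction yields $P_{C_\gamma}TQ_{D_\gamma}=\sum_n(Q_{D_\gamma}^*y_n^*)(\cdot)\,(P_{C_\gamma}x_n)$, so
\[
\sum_\gamma\norm{P_{C_\gamma}TQ_{D_\gamma}}_N\le\sum_n\sum_\gamma\norm{Q_{D_\gamma}^*y_n^*}_{q^*}\norm{P_{C_\gamma}x_n}_p.
\]
For each fixed $n$, Hölder in $\gamma$ with exponents $q^*,q$ bounds the inner sum by $\norm{y_n^*}_{q^*}\bigl(\sum_\gamma\norm{P_{C_\gamma}x_n}_p^q\bigr)^{1/q}$, which is at most $\norm{y_n^*}_{q^*}\norm{x_n}_p$ (again by $p\le q$, via $\ell^p\hookrightarrow\ell^q$). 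Summing in $n$ and letting $\varepsilon\to0$ gives $\sum_\gamma\norm{P_{C_\gamma}TQ_{D_\gamma}}_N\le\norm{T}_N$, establishing both absolute nuclear-norm convergence and $\norm{\tilde T}_N\le\norm{T}_N$. Equality of $\norm{\tilde T}_N$ with the sum follows by applying the same inequality to $\tilde T$ in place of $T$ and using $P_{C_\gamma}\tilde TQ_{D_\gamma}=P_{C_\gamma}TQ_{D_\gamma}$. The only real technical point is the operator-norm convergence in the compact half of (a), where finiteness of the $D_\gamma$ is used in an essential way; everything else reduces to a pair of short Hölder-type estimates relying on $p\le q$.
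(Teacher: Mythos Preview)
Your argument for (a) is essentially the paper's: the same disjoint-support computation and the same $\ell^p\hookrightarrow\ell^q$ step, with the compact case handled by the same approximation idea (your version is slightly cleaner, using only a finite $D_0\subset\Lambda$ rather than finite sets on both sides).

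Your treatment of (b), however, takes a genuinely different route. The paper first establishes the norm identity for finite $\Gamma$ by trace duality: it picks near-norming operators $S_\gamma\in K(\ell^p(J),\ell^q(\Lambda))$ for each block, assembles them via part (a) into a single $S$ with $\norm{S}\le1$, and evaluates $\ip{\tilde T}{S}$ and $\ip{T}{S}$; the passage to infinite $\Gamma$ is then a separate Bolzano--Cauchy step invoking Lemma~\ref{L:Kaprox}(b). You instead work directly with an almost optimal nuclear representation and apply H\"older in $\gamma$ with exponents $(q^*,q)$ to the two disjoint-block decompositions, obtaining $\sum_\gamma\norm{P_{C_\gamma}TQ_{D_\gamma}}_N\le\norm{T}_N$ in one stroke. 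This simultaneously gives absolute convergence, the inequality $\norm{\tilde T}_N\le\norm{T}_N$, and (after re-applying the bound to $\tilde T$) the norm equality, without recourse to Proposition~\ref{L:dualita} or Lemma~\ref{L:Kaprox}(b). The paper's duality approach has the conceptual virtue of explaining \emph{why} (a) and (b) mirror each other, but your direct estimate is shorter, more self-contained, and handles arbitrary $\Gamma$ uniformly.
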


\begin{proof} (a) Let us first suppose that $\Gamma$ is finite. Then the convergence is obvious.
It remains to compute the norm of $\tilde T$. First observe that for any $\gamma\in\Gamma$ we have
$Q_{D_\gamma}\tilde T P_{C_\gamma} = Q_{D_\gamma}TP_{C_\gamma}$, hence $\norm{Q_{D_\gamma}TP_{C_\gamma}}\le\norm{\tilde T}$.
It follows $$\norm{\tilde T}\ge\max_{\gamma\in\Gamma} \norm{Q_{D_\gamma}TP_{C_\gamma}}.$$
Let us prove the converse inequality. Denote  $M=\max_{\gamma\in\Gamma} \norm{Q_{D_\gamma}TP_{C_\gamma}}$  and fix any $x\in\ell^p(J)$ with $\norm{x}\le 1$. Then
$$\begin{aligned}\norm{\sum_{\gamma\in\Gamma} Q_{D_\gamma}TP_{C_\gamma}(x)}
&=\left(\sum_{\gamma\in\Gamma} \norm{Q_{D_\gamma}TP_{C_\gamma}(x)}^q\right)^{1/q}
=\left(\sum_{\gamma\in\Gamma} \norm{Q_{D_\gamma}TP_{C_\gamma}P_{C_\gamma}(x)}^q\right)^{1/q}
\\&\le M \left(\sum_{\gamma\in\Gamma} \norm{P_{C_\gamma}x}^q\right)^{1/q}
\le M \left(\sum_{\gamma\in\Gamma} \norm{P_{C_\gamma}x}^p\right)^{1/q}
\le M\end{aligned}.$$
Note that the assumption $p\le q$ was used in the second inequality on the second line of the computation. We have showed that $\norm{\tilde{T}}= M$. Since clearly $M\le\norm{T}$, we deduce that $\norm{\tilde{T}}\le\norm{T}$.

Next suppose that $\Gamma$ is general (possibly infinite). Similarly as above set $M=\sup_{\gamma\in\Gamma} \norm{Q_{D_\gamma}TP_{C_\gamma}}$. Clearly $M\le\norm{T}$. Fix any $x\in\ell^p(J)$. For any finite set $F\subset \Gamma$ we have (by the same computation as above)
$$\norm{\sum_{\gamma\in F} Q_{D_\gamma}TP_{C_\gamma}(x)}
\le M \left(\sum_{\gamma\in F} \norm{P_{C_\gamma}x}^p\right)^{1/q}.$$
Since, $\sum_{\gamma\in \Gamma} \norm{P_{C_\gamma}x}^p\le\norm{x}^p$, 
the Bolzano-Cauchy condition shows that the series $\sum_{\gamma\in\Gamma}Q_{D_\gamma}TP_{C_\gamma}(x)$ converges unconditionally in the norm. If we denote the limit $\tilde{T}(x)$, it is clear that $\tilde{T}$ is a linear operator and $\norm{\title{T}}\le M$. The converse inequality is obvious.

Finally, let us assume that $T$ is moreover compact. 
We will show the unconditional convergence by verifying the Bolzano-Cauchy condition. Fix $\varepsilon>0$. By Lemma~\ref{L:Kaprox}(a) there are finite sets $C\subset J$ and $D\subset \Lambda$ such that $\norm{T-Q_DTP_C}\le\varepsilon$.
Let $H\subset \Gamma$ be finite such that $C\gamma\cap C=\emptyset$ and $D_\gamma\cap D=\emptyset$ for $\gamma\in H$.
Then for any $\gamma\in H$ we have
$$\norm{Q_{D_\gamma}TP_{C_\gamma}}=\norm{Q_{D_\gamma}(T-Q_DTP_C)P_{C_\gamma}}\le\varepsilon,$$
thus
$$\norm{\sum_{\gamma\in H} Q_{D_\gamma}TP_{C_\gamma}}\le\varepsilon.$$
This completes the proof of the Bolzano-Cauchy condition and hence the proof of the unconditional convergence of the series.

(b) Let us first suppose that $\Gamma$ is finite. Then the convergence is obvious and $\tilde T$ is clearly a nuclear operator.
It remains to compute the norm of $\tilde T$. By the triangle inequality we get
$$\norm{\tilde T}_N\le \sum_{\gamma\in \Gamma}\norm{P_{C_\gamma}TQ_{D_\gamma}}_N.$$
Let us show the converse inequality. To this end we will use (a) and Proposition~\ref{L:dualita}. Fix $\varepsilon>0$.
Denote by $M$ the cardinality of $\Gamma$. For each $\gamma\in\Gamma$ we can find $S_\gamma\in K(\ell^p(J),\ell^q(\Lambda))$ such that $\norm{S_\gamma}\le 1$ and $$\Re \ip{P_{C_\gamma}TQ_{D_\gamma}}{S_\gamma}>\norm{P_{C_\gamma}TQ_{D_\gamma}}_N-\frac\varepsilon M.$$
Let 
$$S=\sum_{\gamma\in\Gamma} Q_{D_\gamma}S_\gamma P_{C_\gamma}.$$
Then $S$ is a compact operator and by (a) we deduce $\norm{S}\le 1$. Thus
$$\begin{aligned}\norm{\tilde T}_N&\ge\Re\ip{\tilde T}{S}= \sum_{\gamma,\delta\in\Gamma} \Re\ip{P_{C_\gamma}TQ_{D_\gamma}}{P_{C_\delta}S_\delta Q_{D_\delta}}\\&=\sum_{\gamma,\delta\in\Gamma}\Re\ip{\Phi_{C_\gamma,D_\gamma}^*(T)}{\Phi_{C_\delta,Q_\delta}(S_\delta)}
=\sum_{\gamma,\delta\in\Gamma}\Re\ip{\Phi_{C_\delta,Q_\delta}^*(\Phi_{C_\gamma,D_\gamma}^*(T))}{S_\delta}
\\&=\sum_{\gamma\in\Gamma} \Re\ip{\Phi_{C_\gamma,D_\gamma}^*(T)}{S_\gamma}=\sum_{\gamma\in\Gamma} \Re\ip{P_{C_\gamma}TQ_{D_\gamma}}{S_\gamma}\\&>\sum_{\gamma\in\Gamma}\norm{P_{C_\gamma}TQ_{D_\gamma}}_N-\varepsilon.\end{aligned}$$
This completes the proof of the second inequality. Moreover, a similar computation
$$\begin{aligned}\norm{T}_N&\ge \Re\ip{T}{S}=\sum_{\gamma\in\Gamma} \Re\ip{T}{P_{C_\gamma}S_\gamma Q_{D_\gamma}}
=\sum_{\gamma\in\Gamma}\Re\ip{T}{\Phi_{C_\gamma,D_\gamma}(S_\gamma)}\\&=
\sum_{\gamma\in\Gamma}\Re\ip{\Phi_{C_\gamma,D_\gamma}^*(T)}{S_\gamma}=\sum_{\gamma\in\Gamma} \Re\ip{P_{C_\gamma}TQ_{D_\gamma}}{S_\gamma}\\&>\sum_{\gamma\in\Gamma}\norm{P_{C_\gamma}TQ_{D_\gamma}}_N-\varepsilon.´
\end{aligned}$$
shows that $\norm{\tilde T}_N\le\norm{T}_N$.

To prove the statement for a general $\Gamma$ it is enough to prove the absolute convergence of the series. The rest then follows easily. We will show the absolute convergence by verifying the Bolzano-Cauchy condition. Fix $\varepsilon>0$. By Lemma~\ref{L:Kaprox}(b) there are finite sets $C\subset J$ and $D\subset \Lambda$ such that $\norm{T-P_CTQ_D}_N\le\varepsilon$.
Let $H\subset \Gamma$ be finite such that $C\gamma\cap C=\emptyset$ and $D_\gamma\cap D=\emptyset$ for $\gamma\in H$.
Then
$$\sum_{\gamma\in H}\norm{P_{C_\gamma}TQ_{D_\gamma}}_N=\sum_{\gamma\in H}\norm{P_{C_\gamma}(T-P_CTQ_D)Q_{D_\gamma}}_N\le \norm{T-P_CQD_D}_N\le\varepsilon.$$
This completes the proof of the Bolzano-Cauchy condition and hence the proof of the absolute convergence of the series.
\end{proof}

The next lemma identifies certain reflexive subspaces of spaces of operators and will be used to compute the quantity $\omega(\cdot)$.

\begin{lemma}\label{L:reflexive} Let $p,q\in(1,\infty)$ and let $C\subset J$, $D\subset \Lambda$ be finite sets. Then
\begin{itemize}
	\item[(a)] $K_{C,D}(\ell^p(J),\ell^q(\Lambda))=\{T\in K(\ell^p(J),\ell^q(\Lambda))\setsep (I-Q_D)T(I-P_C)=0\}$
is a reflexive subspace of $K(\ell^p(J),\ell^q(\Lambda))$.
\item[(b)] $N_{C,D}(\ell^q(\Lambda),\ell^p(J))=\{T\in N(\ell^q(\Lambda),\ell^p(J))\setsep (I-P_C)T(I-Q_D)=0\}$
is a reflexive subspace of $K(\ell^p(J),\ell^q(\Lambda))$.
\end{itemize}
\end{lemma}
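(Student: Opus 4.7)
The plan is as follows. In both (a) and (b) the defining condition gives a clean algebraic decomposition of every $T$ in the subspace. Specifically, $T\in K_{C,D}$ is equivalent to $T = TP_C + Q_DT(I-P_C)$, and $T\in N_{C,D}$ is equivalent to $T = TQ_D + P_CT(I-Q_D)$. In either case $T$ splits into two pieces, each factoring through a finite-dimensional space: either through $\ell^p(C)$ or $\ell^q(D)$ on the domain side, or having range in $\ell^q(D)$ or $\ell^p(C)$. In particular every element of $K_{C,D}$ or $N_{C,D}$ has finite rank at most $|C|+|D|$, so the subtlety about which class of operators (compact vs.\ nuclear) is being considered essentially disappears.

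For (a) I would define a map
$$\Psi\colon K_{C,D}(\ell^p(J),\ell^q(\Lambda)) \to L(\ell^p(C),\ell^q(\Lambda)) \oplus L(\ell^p(J\setminus C),\ell^q(D))$$
by restriction, setting $\Psi(T) = (TP_C|_{\ell^p(C)},\,Q_DT|_{\ell^p(J\setminus C)})$. Injectivity follows from the decomposition displayed above (if both components vanish, so does $T$), and surjectivity is obtained by assembling, from a pair $(A,B)$ in the target, the operator $\tilde A + \tilde B$ where $\tilde A$ extends $A$ by zero on $\ell^p(J\setminus C)$ and $\tilde B$ extends $B$ by zero on $\ell^p(C)$, with the natural inclusion $\ell^q(D)\hookrightarrow \ell^q(\Lambda)$. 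Boundedness in the forward direction is clear since $\norm{P_C},\norm{Q_D}\le 1$; boundedness of $\Psi^{-1}$ follows either from the triangle inequality or from the open mapping theorem. The target is reflexive because $\ell^p(C)$ and $\ell^q(D)$ are finite-dimensional, so the two summands are Banach-space isomorphic to $\ell^q(\Lambda)^{|C|}$ and $\ell^{p^*}(J\setminus C)^{|D|}$ respectively, both of which are reflexive since $p,q\in(1,\infty)$.

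For (b) the argument is entirely parallel, using the decomposition $T = TQ_D + P_CT(I-Q_D)$ and the map
$$\Psi\colon N_{C,D}(\ell^q(\Lambda),\ell^p(J)) \to N(\ell^q(D),\ell^p(J)) \oplus N(\ell^q(\Lambda\setminus D),\ell^p(C)).$$
The additional point to verify is that the nuclear norm of each restriction is controlled by $\norm{T}_N$, which is immediate from $\norm{PTQ}_N \le \norm{P}\cdot \norm{T}_N\cdot \norm{Q}$. Because the two target spaces each have either a finite-dimensional domain or a finite-dimensional range, one has $N(\ell^q(D),\ell^p(J)) = L(\ell^q(D),\ell^p(J))$ and $N(\ell^q(\Lambda\setminus D),\ell^p(C)) = L(\ell^q(\Lambda\setminus D),\ell^p(C))$ as vector spaces, with the nuclear norm equivalent to the operator norm on each; thus the summands are isomorphic to $\ell^p(J)^{|D|}$ and $\ell^{q^*}(\Lambda\setminus D)^{|C|}$, again reflexive.

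I do not anticipate any serious obstacle. The only care needed is bookkeeping: checking that the restrictions give the claimed identifications, that the reconstruction of $T$ from a pair $(A,B)$ really lands in $K_{C,D}$ or $N_{C,D}$ (a direct verification that $(I-Q_D)T(I-P_C)=0$, resp.\ $(I-P_C)T(I-Q_D)=0$), and, in (b), that the nuclear-norm estimates run in both directions so that $\Psi$ is a continuous linear bijection between Banach spaces and the open mapping theorem applies.
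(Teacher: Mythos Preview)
Your proof is correct and follows essentially the same approach as the paper: both arguments decompose $K_{C,D}$ (resp.\ $N_{C,D}$) via the projections $P_C$, $Q_D$ into pieces that factor through a finite-dimensional space and are therefore isomorphic to finite powers of $\ell^q(\Lambda)$, $\ell^{p^*}(J\setminus C)$, etc. The only cosmetic difference is that the paper uses the full $2\times 2$ block decomposition (three nonzero blocks), while you combine two of those blocks into a single summand $L(\ell^p(C),\ell^q(\Lambda))$; this is a harmless regrouping.
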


\begin{proof}
(a) Consider the operators $\Phi_{C,D}$, $\Phi_{J\setminus C,D}$, $\Phi_{C,\Lambda\setminus D}$ and $\Phi_{J\setminus C,\Lambda\setminus D}$. All these four operators are are norm-one projections and $K(\ell^p(J),\ell^q(\Lambda))$
is the direct sum of their ranges. Further, the subspace $K_{C,D}(\ell^p(J),\ell^q(\Lambda))$ is the sum of the ranges of first three of these projections. So, it is enough to observe that the three ranges are reflexive. The range of $\Phi_{C,D}$ is finite-dimensional. The range of $\Phi_{J\setminus C,D}$ is canonically isometric to
$L(\ell^p(J\setminus C),\ell^q(D))$, which is isomorphic to  $\ell^{p^*}(J\setminus C)^m$, where $m$ is the cardinality of $D$, hence it is reflexive.  Finally, the range of $\Phi_{C,\Lambda\setminus D}$ is canonically isometric to 
$L(\ell^p(C),\ell^q(\Lambda\setminus D))$, which is isomorphic to $\ell^q(\Lambda\setminus D)^n$, where $n$ is the cardinality of $C$, hence it is reflexive as well.
 
(b) This can be proved similarly as (a) or, alternatively, it can be deduced from (a) using the observation that $N_{C,D}(\ell^q(\Lambda),\ell^p(J))$, being the range of the projection $(I-\Phi_{J\setminus C,\Lambda\setminus D})^*$, is canonically isomorphic to the dual of $K_{C,D}(\ell^p(J),\ell^q(\Lambda))$, which is the range of the projection $I-\Phi_{J\setminus C,\Lambda\setminus D}$.
\end{proof}

The following easy abstract lemma will be used together with the previous one.

\begin{lemma}\label{L:omegaref} Let $X$ be a Banach space, $Y\subset X$ a reflexive subspace and $A\subset X$ a bounded set. Then
$\omega(A)\le\dh(A,Y)$.
\end{lemma}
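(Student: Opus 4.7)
The plan is to directly exhibit a weakly compact set $K\subset X$ that witnesses the desired bound on $\omega(A)$. Set $d=\dh(A,Y)$ and $M=\sup_{a\in A}\norm{a}$, both finite since $A$ is bounded. Fix $\varepsilon>0$. By the very definition of $\dh$, for each $a\in A$ there exists $y_a\in Y$ with $\norm{a-y_a}<d+\varepsilon$, and then $\norm{y_a}<M+d+\varepsilon$. So the elements $y_a$ all lie in the closed ball $K_\varepsilon=(M+d+\varepsilon)B_Y$ of the subspace $Y$.

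The key point is that $K_\varepsilon$ is weakly compact when viewed as a subset of $X$. Indeed, by reflexivity of $Y$, the ball $K_\varepsilon$ is weakly compact in $Y$; since the inclusion $Y\hookrightarrow X$ is a bounded linear map, it is weak-to-weak continuous, so $K_\varepsilon$ remains weakly compact in $X$. By construction each $a\in A$ is at distance less than $d+\varepsilon$ from $K_\varepsilon$, so $\dh(A,K_\varepsilon)\le d+\varepsilon$, whence $\omega(A)\le d+\varepsilon$ from the definition of $\omega$. Letting $\varepsilon\to 0^+$ yields $\omega(A)\le d=\dh(A,Y)$.

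There is no real obstacle here; the lemma is purely abstract and the only subtlety is invoking weak-to-weak continuity of the inclusion $Y\hookrightarrow X$ to ensure that weak compactness in $Y$ survives inside $X$. Everything else is a one-line application of the definitions together with the characterization of weakly compact sets in reflexive spaces as the bounded ones.
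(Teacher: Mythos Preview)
Your proof is correct and follows essentially the same route as the paper's: both arguments exploit that a sufficiently large closed ball of the reflexive subspace $Y$ is weakly compact and already realizes the excess $\dh(A,Y)$. The only cosmetic difference is that the paper fixes a single ball and argues directly that $\dist(x,Y)$ equals the distance to that ball, whereas you pass through an $\varepsilon$-approximation; the underlying idea is identical.
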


\begin{proof} Let $R=\sup\{\norm{x}\setsep x\in A\}$. Given $x\in A$ and $y\in Y$ with $\norm{y}>2R$, then
$$\norm{x-y}\ge\norm{y}-\norm{x} > R \ge\norm{x}=\norm{x-0},$$
so $\dist(x,Y)=\dist(x,Y\cap RB_X)$. Since $Y\cap RB_X$ is weakly compact, we deduce that
$$\omega(A)\le\dh(A,Y\cap RB_X)=\dh(A,Y),$$
which completes the proof.
\end{proof}

The next lemma compares measures of weak non-compactness in a Banach space and in a $1$-complemented subspace.

\begin{lemma}\label{L:1-compl} Let $X$ be a Banach space and $Y\subset X$ a $1$-complemented subspace of $X$. Let $P:X\to Y$ be a norm-one projection of $X$ onto $Y$.
\begin{itemize}
\item[(a)] $\wk[Y]{P(A)}\le\wk A$, $\wck[Y]{P(A)}\le\wck A$  and  $\omega_Y(P(A))\le\omega_X(A)$ for any bounded set $A\subset X$.
\item[(b)] $\wk[Y]A=\wk A$, $\wck[Y]A=\wck A$  and  $\omega_Y(A)=\omega_X(A)$ for any bounded set $A\subset Y$.
\end{itemize}
\end{lemma}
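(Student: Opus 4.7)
The plan is to work with the bidual projection $P^{**}:X^{**}\to Y^{**}$, a norm-one $w^*$-$w^*$ continuous projection extending $P$, together with the canonical isometric embedding $j:Y^{**}\to X^{**}$. First I would verify that $P^{**}\circ j=I_{Y^{**}}$, so that $j\circ P^{**}$ is a $w^*$-$w^*$ continuous linear projection of $X^{**}$ onto $j(Y^{**})$; in particular $j(Y^{**})$ is $w^*$-closed in $X^{**}$, and $j$ is a $w^*$-$w^*$ homeomorphism onto its image. From $w^*$-$w^*$ continuity and $w^*$-compactness I would then record two technical observations: $P^{**}(\wscl{B})$ equals the $w^*$-closure of $P(B)$ taken inside $Y^{**}$ for every bounded $B\subset X$, and every $w^*$-cluster point of a sequence $(P(x_n))$ in $Y^{**}$ has the form $P^{**}x^{**}$ for some $x^{**}\in\clu{x_n}$ (extract a further $w^*$-convergent subnet in the $w^*$-compact ball of $X^{**}$).

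Part (a) then reduces to three short computations. For $\omega$, if $K\subset X$ is weakly compact then $P(K)\subset Y$ is weakly compact as the image under a weak-to-weak continuous map, and $\dh(P(A),P(K))\le\|P\|\cdot\dh(A,K)=\dh(A,K)$. For $\wk$ and $\wck$, given an element $y^{**}$ in the relevant subset of $Y^{**}$, I would pick a preimage $x^{**}$ in the corresponding subset of $X^{**}$ with $P^{**}x^{**}=y^{**}$; since $P(X)=Y$ and $\|P^{**}\|\le 1$,
$$\dist(y^{**},Y)\le\inf_{x\in X}\|P^{**}(x^{**}-x)\|\le\inf_{x\in X}\|x^{**}-x\|=\dist(x^{**},X),$$
and the three inequalities follow after taking the appropriate suprema.

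For part (b), $A\subset Y$ gives $P(A)=A$, so part (a) immediately yields $\wk[Y]{A}\le\wk[X]{A}$, $\wck[Y]{A}\le\wck[X]{A}$ and $\omega_Y(A)\le\omega_X(A)$. For the converses, any weakly compact $K\subset Y$ stays weakly compact in $X$, giving $\omega_X(A)\le\omega_Y(A)$ at once. For the other two measures the key point is that $A\subset Y$ forces $\wscl{A}$ and every $w^*$-cluster set of a sequence in $A$ to sit inside the $w^*$-closed set $j(Y^{**})$, and via the $w^*$-homeomorphism $j^{-1}$ they correspond to the same objects computed inside $Y^{**}$. Combined with the identity $\dist(j(y^{**}),X)=\dist(y^{**},Y)$ for $y^{**}\in Y^{**}$ (whose inequality $\le$ uses $Y\subset X$ and $j$ isometric, and whose $\ge$ uses $P(X)=Y$ together with $\|P^{**}\|\le 1$), this yields $\wk[X]{A}\le\wk[Y]{A}$ and $\wck[X]{A}\le\wck[Y]{A}$. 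The only step requiring a moment's care is the $w^*$-closedness of $j(Y^{**})$, which the projection $j\circ P^{**}$ supplies for free; no deeper obstacle is anticipated.
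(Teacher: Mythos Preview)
Your approach is essentially the same as the paper's: work with the key inequality $\dist(P^{**}x^{**},Y)\le\dist(x^{**},X)$ (coming from $\norm{P^{**}}\le1$ and $P(X)=Y$), use $w^*$-$w^*$ continuity of $P^{**}$ to relate the relevant subsets of $Y^{**}$ and $X^{**}$, and handle $\omega$ by pushing weakly compact sets forward under $P$. Part~(b) in the paper is dispatched in one line (the inequalities `$\ge$' are declared obvious and do not need complementability; the inequalities `$\le$' come from~(a)); your more detailed treatment via the $w^*$-closedness of $j(Y^{**})$ and the identity $\dist(j(y^{**}),X)=\dist(y^{**},Y)$ is correct but more than is needed---only the `$\le$' half of that identity is used, and that half does not require the projection.

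There is one slip in your handling of $\wck{\cdot}$ in part~(a). You record that every $y^{**}\in\clu{P(x_n)}$ has a preimage $x^{**}\in\clu{x_n}$, and then argue ``pick a preimage, deduce $\dist(y^{**},Y)\le\dist(x^{**},X)$, take suprema''. But $\wck{\cdot}$ involves the \emph{infimum} over cluster points: $\dist(\clu{y_k},Y)=\inf_{y^{**}\in\clu{y_k}}\dist(y^{**},Y)$. Knowing that each $y^{**}$ lifts to \emph{some} $x^{**}$ does not bound this infimum by $\inf_{x^{**}\in\clu{x_k}}\dist(x^{**},X)$. The paper uses the opposite (and trivial) inclusion: for every $x^{**}\in\clu{x_k}$ one has $P^{**}x^{**}\in\clu{y_k}$, whence $\dist(\clu{y_k},Y)\le\dist(P^{**}x^{**},Y)\le\dist(x^{**},X)$; now take the infimum over $x^{**}$. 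Your subnet observation is true but points the wrong way for this particular inequality; swapping to the trivial direction fixes the argument immediately.
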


\begin{proof} (a) We start by observing that
\begin{equation}\label{eq:compl1}
\dist(P^{**}x^{**},Y)\le\dist(x^{**},X)\mbox{ for any }x^{**}\in X^{**}.
\end{equation}
Indeed, given $x^{**}\in X^{**}$ and $x\in X$ 
we have
$$\norm{x^{**}-x}\ge\norm{P^{**}(x^{**}-x)}=\norm{P^{**}x^{**}-Px}.$$

Let us continue by proving the first inequality. Since $A$ is bounded and $P^{**}$ is weak$^*$-to-weak$^*$ continuous, we see that $P^{**}(\wscl{A})=\wscl{P(A)}$. Therefore, given any $y^{**}\in \wscl{P(A)}$, there is $x^{**}\in \wscl{A}$ with $P^{**}(x^{**})=y^{**}$. By \eqref{eq:compl1} we see that $\dist(y^{**},Y)\le \dist(x^{**},X)$. Since $y^{**}$ was arbitrary, we deduce $\wk[Y]{P(A)}\le\wk A$.

To prove the second inequality fix a sequence $(y_k)$ in $P(A)$. We can find a sequence $(x_k)$ in $A$ with $y_k=Px_k$ for each $k\in\en$. For any weak$^*$-cluster point $x^{**}$ of $(x_k)$ its image $P^{**}x^{**}$ is a weak$^*$-cluster point of $(y_k)$ and, by \eqref{eq:compl1}, we see that $\dist(P^{**}x^{**},Y)\le \dist(x^{**},X)$. 
Therefore $\dist(\clu{y_k},Y)\le \dist(x^{**},X)$. 
Since $x^{**}$ was arbitrary, we deduce
$$\dist(\clu{y_k},Y)\le \dist(\clu{x_k},X)\le\wck{A}.$$
Since this holds for any sequence $(y_k)$ in $P(A)$, we conclude 
$\wck[Y]{P(A)}\le\wck A$.

Finally, fix any $c>\omega_X(A)$. It follows that there is a weakly compact set $K\subset X$ with $\dh(A,K)<c$. Then $P(K)$ is a weakly compact subset of $Y$. Moreover, given $a\in A$ and $x\in K$, we have
$$\norm{a-x}\ge\norm{P(a-x)}=\norm{Pa-Px}\ge \dist(Pa,P(K)).$$
It follows that $\dh(A,K)\ge\dh(P(A),P(K))$, thus $\omega_Y(P(A))<c$. This completes the proof.

(b)  The inequalities `$\geq$' in all the three cases are obvious and holds also without the complementability assumption.
The convese inequalities follow from (a).
\end{proof}

The last ingredient is the following result from \cite{qdpp}.

\begin{lemma}\label{L:l1suma} \ \cite[Lemma 7.2(iii)]{qdpp} 
Let $(X_\gamma)_{\gamma\in\Gamma}$ be a family of reflexive Banach spaces and let 
$X=\left(\bigoplus_{\gamma\in \Gamma} X_\gamma\right)_{\ell^1}$. Then for any nonempty bounded set $A\subset X$ we have
$$\omega(A)=\wk A=\wck A=\inf\left\{\sup_{x\in A} \sum_{\gamma\in\Gamma\setminus F}\norm{x_\gamma}
\setsep F\subset \Gamma\mbox{ finite}\right\}.$$
\end{lemma}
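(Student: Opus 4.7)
Denote the right-hand side by $q(A)$; with $P_F:X\to X$ the norm-one projection onto $X_F=\bigoplus_{\gamma\in F}X_\gamma$, we have $\|(I-P_F)x\|=\sum_{\gamma\notin F}\|x_\gamma\|$, so $q(A)=\inf\{\sup_{x\in A}\|(I-P_F)x\|:F\subset\Gamma\text{ finite}\}$. In view of the universal chain $\wck A\le\wk A\le\omega(A)$, the lemma reduces to the two inequalities $\omega(A)\le q(A)\le\wck A$. For the easier inequality $\omega(A)\le q(A)$, fix $\varepsilon>0$, choose a finite $F\subset\Gamma$ with $\sup_{x\in A}\|(I-P_F)x\|<q(A)+\varepsilon$, and let $R=\sup_{x\in A}\|x\|$. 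Since $X_F$ is a finite $\ell^1$-sum of reflexive spaces it is itself reflexive, so $R\cdot B_{X_F}$ is weakly compact in $X$ and contains each $P_Fx$ for $x\in A$; hence $\dh(A,R\cdot B_{X_F})\le\sup_{x\in A}\|x-P_Fx\|<q(A)+\varepsilon$, giving $\omega(A)\le q(A)$.

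For $q(A)\le\wck A$, assume $q(A)>c$ and fix $\varepsilon>0$. Construct inductively finite sets $F_0\subset F_1\subset\cdots\subset\Gamma$ and vectors $x^n\in A$ with $\|(I-P_{F_{n-1}})x^n\|>c$ (using $q(A)>c$) and $\|(I-P_{F_n})x^n\|<\varepsilon/2^n$ (by enlarging $F_n$, using summability of the coordinates of $x^n$). The blocks $\tilde x^n:=(P_{F_n}-P_{F_{n-1}})x^n$ then live in the pairwise disjoint sets $E_n:=F_n\setminus F_{n-1}$ and satisfy $\|\tilde x^n\|>c-\varepsilon/2^n$. Let $\Gamma_0:=\bigcup_nF_n$ (countable). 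By reflexivity of each $X_\gamma$ and a diagonal extraction, pass to a subsequence so that $x^n_\gamma\to y_\gamma$ weakly in $X_\gamma$ for every $\gamma\in\Gamma_0$; extending by $y_\gamma:=0$ for $\gamma\notin\Gamma_0$ (consistent with $\sum_{\gamma\notin\Gamma_0}\|x^n_\gamma\|\le\|(I-P_{F_n})x^n\|\to 0$), Fatou for the counting measure gives $y=(y_\gamma)\in X$ with $\|y\|\le\liminf_n\|x^n\|\le R$.

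Fix a free ultrafilter $\mathcal U$ on $\mathbb N$ and put $\Phi:=\text{w}^*\text{-}\lim_{\mathcal U}x^n\in X^{**}$; then $\Phi\in\wscl A$ and it suffices to prove $\dist(\Phi,X)\ge c$. The key structural ingredient is a Yosida--Hewitt-type isometric decomposition $X^{**}=X\oplus_1\Ker\iota^*$, where $\iota:X^*_{c_0}\hookrightarrow X^*$ is the inclusion of the $c_0$-sum $X^*_{c_0}=(\bigoplus_\gamma X_\gamma^*)_{c_0}$ (whose dual is $X$ by reflexivity of the $X_\gamma$) into the $\ell^\infty$-sum $X^*$; the $\oplus_1$-splitting is established by approximating any near-norm-attaining $f\in B_{X^*}$ for $\Psi\in X^{**}$ by the sum of a finitely supported part (detecting only $\iota^*\Psi$) and a disjointly supported tail (detecting only $\Psi-\iota^*\Psi$), exploiting that the $X^*$-norm is a coordinatewise supremum. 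In this decomposition, the diagonal extraction forces $\iota^*\Phi=y$, so $\dist(\Phi,X)=\|\Phi-y\|$. To witness $\|\Phi-y\|\ge c-O(\varepsilon)$, pick $g^{(n)}\in X_{E_n}^*$ (extended by zero) with $\|g^{(n)}\|\le 1$ and $\ip{g^{(n)}}{\tilde x^n-P_{E_n}y}>\|\tilde x^n-P_{E_n}y\|-\varepsilon/2^n$; since $\|\tilde x^n-P_{E_n}y\|\ge\|\tilde x^n\|-\|P_{E_n}y\|>c-\varepsilon/2^n-\|P_{E_n}y\|$ and $\sum_n\|P_{E_n}y\|\le\|y\|<\infty$ (so $\|P_{E_n}y\|\to 0$), the diagonal contribution is $\ge c-O(\varepsilon)$. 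Put $f:=\sum_ng^{(n)}\in X^*$; disjointness of supports yields $\|f\|\le 1$. Splitting $\ip{f}{x^m-y}=\sum_n\ip{g^{(n)}}{P_{E_n}(x^m-y)}$ into the diagonal ($n=m$, $\ge c-O(\varepsilon)$), the upper tail ($n>m$, bounded by $\|(I-P_{F_m})x^m\|+\sum_{n>m}\|P_{E_n}y\|\to 0$), and the lower tail ($n<m$, each fixed-$n$ term vanishing as $m\to\mathcal U$ by weak coordinate convergence on $E_n$), then passing to $\lim_{\mathcal U}$ gives $\ip{f}{\Phi-y}\ge c-O(\varepsilon)$; letting $\varepsilon\to 0$ concludes. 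The main technical obstacle is the lower tail $\sum_{n<m}\ip{g^{(n)}}{P_{E_n}(x^m-y)}$, where the summation index grows with the ultrafilter index so term-by-term weak-coordinate convergence alone is insufficient; this is resolved by a secondary diagonal refinement of the subsequence which, combined with the $\ell^1$-tail bound $\sum_n\|P_{E_n}(x^m-y)\|\le 2R$, forces the partial sums to be uniformly negligible along $\mathcal U$.
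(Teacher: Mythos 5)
The paper does not actually prove this lemma --- it quotes it from \cite[Lemma 7.2(iii)]{qdpp} --- so you are supplying an argument where the authors give none. Your architecture is the standard and correct one: the reduction to the two inequalities $\omega(A)\le q(A)\le \wck{A}$, the weakly compact set $R\cdot B_{X_F}$ for the first of them, the gliding-hump construction of blocks $E_n$ with $\norm{\tilde x^n}>c-\varepsilon 2^{-n}$, the $\ell^1$-decomposition $X^{**}=X\oplus_1\Ker\iota^*$ (which is what legitimately converts a lower bound on $\norm{\Phi-y}$ into a lower bound on $\dist(\Phi,X)$, and without which your functional estimate would prove nothing about the distance), the identification $\iota^*\Phi=y$, and the disjointly supported norming functionals summed in the $\ell^\infty$-sum dual. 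All of that is sound.

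The gap is exactly at the point you flag as the main obstacle, and your proposed resolution does not close it. The two facts you invoke --- that each fixed-$n$ term $\ip{g^{(n)}}{P_{E_n}(x^m-y)}$ tends to $0$ in $m$, and that the columns satisfy $\sum_n\norm{P_{E_n}(x^m-y)}\le 2R$ --- do \emph{not} imply that the column sums become small along any subsequence: the array $a_{nm}=-2R$ for $n=m-1$ and $a_{nm}=0$ otherwise has rows tending to zero and columns of $\ell^1$-norm $2R$, yet every column sum equals $-2R$, and no subsequence of columns helps. Concretely, after a secondary refinement to $(x^{m_k})$ you can kill the terms with $n<k$, but the intermediate terms with $k\le n<m_k$ are controlled only by $\norm{P_{E_n}x^{m_k}}$, which can total nearly $R$; weak coordinate convergence gives no norm control there because the $X_\gamma$ may be infinite-dimensional, and nothing forces $g^{(n)}$ (chosen to norm $\tilde x^n-P_{E_n}y$) to be nearly orthogonal to $P_{E_n}(x^{m_k}-y)$. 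The correct repair is to \emph{interleave} the choice of the functionals with the choice of the sub-subsequence and to coarsen the blocks accordingly: once $y$ is fixed, choose $k_1<k_2<\cdots$ and $g^{(1)},g^{(2)},\dots$ alternately, with $g^{(j)}$ supported on $F_{k_j}\setminus F_{k_{j-1}}$ and norming $P_{F_{k_j}\setminus F_{k_{j-1}}}(x^{k_j}-y)$ (one still has $\norm{P_{F_{k_j}\setminus F_{k_{j-1}}}x^{k_j}}>c-\varepsilon 2^{-k_j}$ since $F_{k_{j-1}}\subset F_{k_j-1}$), and with $k_j$ picked so large that $\abs{\ip{g^{(i)}}{P_{F_{k_i}\setminus F_{k_{i-1}}}(x^{k_j}-y)}}<\varepsilon 2^{-j}/j$ for the finitely many $i<j$ already fixed. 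Then \emph{every} block preceding $x^{k_j}$ has its functional chosen before $k_j$ is, the whole lower tail is $<\varepsilon 2^{-j}$, and one gets $\liminf_j\ip{f}{x^{k_j}-y}\ge c$ along the entire refined sequence --- which you need anyway, since $\wck{A}$ involves $\dist(\clu{x_n},X)$, an infimum over \emph{all} weak$^*$ cluster points, so an estimate along a single ultrafilter does not suffice.
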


Now we are ready to give the proof of the first main result.

\medskip

\noindent{\bf Proof of Theorem~\ref{T:main-N}.} 
The assertion (a) follows from Corollary~\ref{c:pitt}.

(b) Suppose $p\le q$. The inequalities $\wck[Z]{A}\le\wk[Z]{A}\le\omega(A)$ are obvious. 

Further, fix any $C\subset J$ and $D\subset\Lambda$ finite. Set $Y=N_{C,D}(\ell^q(\Lambda),\ell^p(J))$ (see Lemma~\ref{L:reflexive}(b)). By the quoted lemma the subspace $Y$ is reflexive. By Lemma~\ref{L:omegaref} we deduce
that $\omega(A)\le\dh(A,Y)$. Since it is clear that
$$\dh(A,Y)\le \sup_{T\in A}\norm{(I-P_C)T(I-Q_D)},$$ we deduce that
$$\omega(A)\le\inf\left\{ \sup_{T\in A}\norm{(I-P_C)T(I-Q_D)}_N\setsep C\subset J, D\subset\Lambda \mbox{ finite}\right\}.$$

To complete the proof  it suffices to show that
$$\inf\left\{ \sup_{T\in A}\norm{(I-P_C)T(I-Q_D)}_N\setsep C\subset J, D\subset\Lambda \mbox{ finite}\right\}\le\wck[Z]{A}.$$
If the left-hand side is zero, it is trivial. Suppose that the left hand-side is strictly positive and choose any numbers $c$ such that
$$0<c<\inf\left\{ \sup_{T\in A}\norm{(I-P_C)T(I-Q_D)}_N\setsep C\subset J, D\subset\Lambda \mbox{ finite}\right\}$$
and an arbitrary $\varepsilon\in(0,c)$.
By induction we can construct a sequence $(T_n)$ in $A$ and a sequence of finite sets
$C'_n\subset J$ and $D'_n\subset \Lambda$ with the following properties:
\begin{itemize}
	\item $C'_1=D'_1=\emptyset$,
	\item $\norm{(I-P_{C'_n})T_n(I-Q_{D'_n})}_N>c$ for $n\in\en$,
	\item $C'_{n+1}\supset C'_n$ and $D'_{n+1}\supset D'_n$ for $n\in\en$,
	\item $\norm{T_k-P_{C'_{n+1}}T_kQ_{D'_{n+1}}}_N<\varepsilon$ for  $k,n\in\en$, $n\ge k$.
\end{itemize}
The construction can be done easily by an iterated use of the assumptions and Lemma~\ref{L:Kaprox}(b).

For $n\in \en$ set $C_n=C'_{n+1}\setminus C_n$ and $D_n=D'_{n+1}\setminus D'_n$. Let $\Psi:T\to\tilde T$ be the mapping provided by Lemma~\ref{L:Kdiag}(b). By the quoted lemma we see that $\Psi$ is a norm-one projection. Hence, by  Lemma~\ref{L:1-compl} we see that $\wck[Z]{A}\ge\wck[\Psi(Z)]{\Psi(A)}$. Moreover, by Lemma~\ref{L:Kdiag}(b) the range $\Psi(Z)$ is canonically isometric to the $\ell^1$-sum of the finite-dimensional spaces $P_{C_n}ZQ_{D_n}$.
So, we deduce by Lemma~\ref{L:l1suma} that
$$\wck[\Psi(Z)]{\Psi(A)}
\ge\inf_{m\in\en} \sup_{k\in\en} \sum_{n\ge m}\norm{P_{C_n}T_k Q_{D_n}}_N\ge\inf_{m\in\en}\norm{P_{C_m}T_m Q_{D_m}}_N.
$$
For any $m\in\en$ we have
$$\begin{aligned}
\norm{P_{C_m}T_m Q_{D_m}}_N
&\ge\norm{(I-P_{C'_m})T_m(I-Q_{D'_m})}_N\\&\qquad\qquad-\norm{P_{C_m}T_m Q_{D_m}-(I-P_{C'_m})T_m(I-Q_{D'_m})}_N
\\&>c-\norm{(I-P_{C'_m})(P_{C'_{m+1}}T_m Q_{D'_{m+1}}-T_m)(I-Q_{D'_m})}_N
\\&\ge c-\norm{P_{C'_{m+1}}T_m Q_{D'_{m+1}}-T_m}_N>c-\varepsilon.
\end{aligned}$$
Hence, $\wck[\Psi(Z)]{\Psi(A)}\ge c-\varepsilon$ and therefore $\wck[Z]A\ge c-\varepsilon$.
Since $c$ and $\varepsilon$ were arbitrary, this completes the proof.

(c) Suppose $A\subset N(\ell^q(\Lambda),\ell^p(J))$ is a nonempty bounded set. Fix any finite sets $C\subset J$ and $D\subset\Lambda$. Then
$$B=\{P_CTQ_D\setsep T\in A\}$$ 
is a bounded subset of a finite-dimensional subspace, so it is relatively norm-compact. Therefore
$$\chi(A)\le\dh(A,B)\le\sup_{T\in A}\norm{T-P_CTQ_D}_N.$$
This completes the proof of the first inequality.

To prove the second inequality, fix any $\varepsilon>0$. Then there is a finite set $F\subset N(\ell^q(\Lambda),\ell^p(J))$ with $\dh(A,F)<\chi(A)+\varepsilon$. By Lemma~\ref{L:PhiCD}(b) there are finite sets $C\subset J$ and $D\subset\Lambda$ such that $\norm{S-P_CSQ_D}_N<\varepsilon$ for each $S\in F$. Fix any $T\in A$. Then there is $S\in F$ with $\norm{T-S}_N<\chi(A)+\varepsilon$. Therefore
$$\begin{aligned}
\norm{T-P_CTQ_D}_N &\le \norm{T-S}_N+\norm{S-P_CSQ_D}_N+\norm{P_C(S-T)Q_D}_N\\&\le\chi(A)+\varepsilon+\varepsilon+\chi(A)+\varepsilon=2\chi(A)+3\varepsilon.
\end{aligned}
$$
Since $\varepsilon>0$ is arbitrary, the second inequality follows.
\qed

\begin{example}\label{ex}
There are two sets $A,B\subset N(\ell^2)$ such that
$$\begin{gathered}
\chi(A)=\inf\left\{ \sup_{T\in A}\norm{T- P_CT Q_D}_N\setsep C,D\subset \en \mbox{ finite}\right\}=1,\\
\chi(B)<\inf\left\{ \sup_{T\in B}\norm{T- P_CT Q_D}_N\setsep C,D\subset \en \mbox{ finite}\right\}.
\end{gathered}$$
\end{example}

\begin{proof}
(a) For $n\in\en$ define the operator
$$T_n(x)=x_ne_n,\quad x=(x_k)\in\ell^2$$
and set $A=\{T_n\setsep n\in\en\}$. It is clear that $\norm{T_n}_N=1$ for each $n\in\en$ (cf. \cite[Proposition 2.1]{ryan}, recall that by \cite[Corollary 4.8]{ryan} the space $N(\ell^2)$ coincides with the completed projective tensor product $\ell^{2}\hat{\otimes}_\pi\ell^2$). Therefore it is obvious that
$$\inf\left\{ \sup_{T\in A}\norm{T- P_CT Q_D}_N\setsep C,D\subset \en \mbox{ finite}\right\}=1.$$
It remains to show that $\chi(A)\ge 1$ (the converse inequality follows from Theorem~\ref{T:main-N}(c).) To this end fix any finite set $F\subset N(\ell^2)$ and $\varepsilon>0$. By Lemma~\ref{L:Kaprox}(b) there are finite sets $C,D\subset N$ such that $\norm{S-P_CSQ_D}_N<\varepsilon$ (and, a fortiori, $\norm{(I-P_C)S(I-Q_D)}_N<\varepsilon$) for each $S\in F$. Let $n\in\en\setminus(C\cup D)$. Then for any $S\in F$ we have
$$\begin{aligned}
\norm{T_n-S}_N&\ge \norm{(I-P_C)(T_n-S)(I-Q_D)}_N=
 \norm{T_n-(I-P_C)S(I-Q_D)}_N\\&\ge \norm{T_n}_N- \norm{(I-P_C)S(I-Q_D)}_N
 >1-\varepsilon.\end{aligned}$$
 It follows $\chi(A)\ge 1-\varepsilon$. Since $\varepsilon>0$ is arbitrary, we deduce $\chi(A)\ge 1$.
 
 (b) For $n\in\en$ define the operator
$$U_n(x)=(x_1+x_n)(e_1+e_n),\quad x=(x_k)\in\ell^2$$
 and set $B=\{U_n\setsep n\in\en\}\cup\{10T_1\}$, where $T_1$ is the operator from (a). Since $\norm{U_n}_N=2$ for each $n\in\en$ (this follows again from \cite[Proposition 2.1]{ryan}),
 we have $\dh(B,\{0,10T_1\})\le 2$, hence $\chi(B)\le 2$.
 
Let $C,D\subset \en$ be two finite sets. If $1\notin C\cap D$, then $P_CT_1Q_D=0$, hence
$$\sup_{T\in B}\norm{T- P_CT Q_D}_N\ge \norm{10T_1}_N=10.$$
Suppose that $1\in C\cap D$. Fix $n\in\en$ with $n\notin C\cup D$. 
Then
$$\sup_{T\in B}\norm{T- P_CT Q_D}_N\ge \norm{U_n-P_CU_n Q_D}_N.$$
Set $U=U_n-P_CU_n Q_D$ and compute its nuclear norm. We have
$$
Ux=x_ne_1 + (x_1+x_n)e_n, \quad x=(x_k)\in\ell^2.
$$
Then $U$ is selfadjoint, hence 
$$U^*U x=U^2 x=(x_1+x_n)e_1 +(x_1+2x_n)e_n, \quad x=(x_k)\in\ell^2.$$
An easy computation shows that the eigenvalues of $U^*U$ are $0$, $\frac{3+\sqrt5}{2}$ and $\frac{3-\sqrt5}{2}$. Therefore (using for example \cite[the first paragraph on p. 151, Definition on p. 152 and Lemma 16.13]{meisevogt}) we get
$$\begin{aligned}
\norm{U}_N&=\sqrt{\frac{3+\sqrt5}{2}}+\sqrt{\frac{3-\sqrt5}{2}}=\frac{\sqrt{5+2\sqrt5+1}+\sqrt{5-2\sqrt5+1}}{2}
\\&=\frac{\sqrt5+1+\sqrt5-1}{2}=\sqrt5. \end{aligned}$$
It follows that
$$\inf\left\{ \sup_{T\in B}\norm{T- P_CT Q_D}_N\setsep C,D\subset \en \mbox{ finite}\right\}\ge\sqrt5>2\ge\chi(B),$$
which completes the proof.
\end{proof}

\section{Preduals of atomic von Neumann algebras}

In this section we provide a proof of Theorem~\ref{T:main-vN}.
The proof will be done using Theorem~\ref{T:main-N} and some facts on von Neumann algebras. The mere coincidence of the measures of weak non-compactness easily follows from Theorem~\ref{T:main-N} using known results.
Indeed, if $M\subset L(H)$ is an atomic von Neumann algebra, by \cite[Theorem 4.2.2]{blackadar} there is a norm-one projection $P:L(H)\to M$ which is also weak$^*$-to-weak$^*$ continuous. It follows that the predual of $M$ is $1$-complemented in $N(H)$, which is the predual of $L(H)$. So, the equality of measures of weak non-compactness follows from Theorem~\ref{T:main-N} and Lemma~\ref{L:1-compl}.

If we wish to prove not only the coincidence of the measures of weak non-compactness, but also the formula given in the statement of Theorem~\ref{T:main-vN}, we need a more detailed description of $P$ and a representation of $M$. Such a representation is mentioned (without proof) 
in \cite[p. 3]{Akemann-Anderson}. We give a complete elementary proof
for the sake of completeness and, further, in order to obtain the formula easily.

\begin{prop}\label{p:rozklad} Let $H$ be a complex Hilbert space and $M\subset L(H)$ an atomic von Neumann algebra separating points of $H$. Let $(p_\alpha)_{\alpha\in\Lambda}$ be a family of pairwise orthogonal atomic projections in $M$ with sum equal to the unit (i.e., to the identity operator). For any $C\subset \Lambda$ denote $p_C=\sum_{\alpha\in C} p_\alpha$. Then there is a partition $(Z_\gamma)_{\gamma\in\Gamma}$ of $\Lambda$ to nonempty subsets such that
\begin{itemize}
\item[(i)] $p_{Z_\gamma}$ belongs to the center of $M$ for each $\gamma\in\Gamma$,
\item[(ii)] $M=\left\{ T\in L(H)\setsep T=\sum_{\gamma\in\Gamma} p_{Z_\gamma}Tp_{Z_\gamma} \right\},$ where the sum is taken in the weak operator topology.
\end{itemize}
Moreover, the family of projections $\{p_{Z_\gamma}\setsep\gamma\in\Gamma\}$ does not depend on the particular choice of the family $(p_\alpha)_{\alpha\in\Lambda}$.
\end{prop}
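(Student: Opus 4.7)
The plan is to organise the proof around three natural sub-tasks: constructing the partition from the central supports of the atoms, verifying the block-diagonal characterisation of $M$, and establishing uniqueness. I expect the bulk of the abstract manipulation to lie in the first step, while the delicate point will be extracting enough structure from the assumption that $M$ separates points of $H$.

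First I would introduce the central support $c_\alpha$ of each atom $p_\alpha$ (the smallest central projection of $M$ dominating $p_\alpha$) and record the elementary but crucial observation that for any central projection $z \in M$ one has $z p_\alpha \in \{0, p_\alpha\}$: indeed, $z p_\alpha$ is a self-adjoint idempotent in $M$ bounded by the atom $p_\alpha$, so atomicity forces one of the two alternatives. Applying this to $z = c_\alpha$ together with $\sum_\alpha p_\alpha = 1$ gives $c_\alpha = \sum_{\beta : p_\beta \le c_\alpha} p_\beta$. I would then define $\alpha \sim \beta$ iff $c_\alpha = c_\beta$ and let $(Z_\gamma)_{\gamma \in \Gamma}$ be the equivalence classes. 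The key lemma is that $p_\beta \le c_\alpha$ already forces $c_\beta = c_\alpha$: the inequality $c_\beta \le c_\alpha$ is immediate from minimality of $c_\beta$; for the reverse, one tests $c_\beta p_\alpha \in \{0, p_\alpha\}$. The option $c_\beta p_\alpha = 0$ would give $c_\alpha \le 1-c_\beta$, hence $c_\beta = c_\alpha c_\beta = 0$, contradicting $p_\beta \ne 0$; therefore $c_\beta p_\alpha = p_\alpha$, so $c_\alpha \le c_\beta$. This identifies $p_{Z_\gamma}$ with $c_\alpha$ for any $\alpha \in Z_\gamma$, proving (i).

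Next I would turn to (ii). The inclusion ``$\subset$'' is direct: by centrality of each $p_{Z_\gamma}$ one has $p_{Z_\gamma} T p_{Z_\gamma} = T p_{Z_\gamma}$ for $T \in M$, and the partial sums $\sum_{\gamma \in F} T p_{Z_\gamma} = T \sum_{\gamma \in F} p_{Z_\gamma}$ converge in SOT, hence WOT, to $T \cdot 1 = T$, because $\sum_\gamma p_{Z_\gamma} = 1$ in SOT and left multiplication by $T$ is SOT-continuous. The harder, reverse inclusion requires showing that each corner $p_{Z_\gamma} M p_{Z_\gamma}$ coincides with $L(p_{Z_\gamma} H)$. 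That the corner is a factor follows from the same atomicity trick: any central $q \in M$ with $0 < q \le p_{Z_\gamma}$ equals $\sum_{\beta \in W} p_\beta$ for some $W \subset Z_\gamma$, and any $\beta \in W$ satisfies $p_{Z_\gamma} = c_\beta \le q$, forcing $q = p_{Z_\gamma}$. To then promote the factor to all of $L(p_{Z_\gamma} H)$, I would invoke the separation hypothesis: it ensures that each atom $p_\alpha$ is a one-dimensional projection as an operator on $H$, so that unit vectors $\xi_\alpha \in p_\alpha H$ form an orthonormal basis $(\xi_\alpha)_{\alpha \in Z_\gamma}$ of $p_{Z_\gamma} H$. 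Murray--von Neumann equivalence of the atoms inside the factor $p_{Z_\gamma} M p_{Z_\gamma}$ then produces partial isometries realising, up to phase, all matrix units $\xi_\beta \mapsto \xi_\alpha$, whose WOT-closed linear span is $L(p_{Z_\gamma} H)$. Since $M$ itself is WOT-closed, every block-diagonal $T$ lies in $M$.

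Finally, uniqueness of $\{p_{Z_\gamma}\}$ is essentially automatic: by the atomicity argument again, any central $z \in M$ decomposes as $z = \sum_\gamma z p_{Z_\gamma}$ with each $z p_{Z_\gamma}$ a central subprojection of $p_{Z_\gamma}$, hence equal to $0$ or $p_{Z_\gamma}$; so $\{p_{Z_\gamma} : \gamma \in \Gamma\}$ is precisely the set of atoms of the centre of $M$, which is intrinsic to $M$ and does not depend on the initial choice of $(p_\alpha)$. The main obstacle, as anticipated, is extracting the statement ``each atom $p_\alpha$ is rank-one on $H$'' from the phrase ``$M$ separates points of $H$''; without it, an obstruction of the form $L(K) \otimes 1_n$ acting on $K \otimes \mathbb{C}^n$ could arise, producing atoms of rank $n$ and block-diagonal operators that lie outside $M$.
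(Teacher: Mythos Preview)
Your proof is correct and reaches the same conclusions as the paper's, but the route to the reverse inclusion in (ii) is genuinely different. You build the partition via central supports; the paper instead sets $Z_\alpha=\bigcap\{C\ni\alpha: p_C\mbox{ is central in }M\}$, which amounts to the same thing, and both of you finish uniqueness by identifying the $p_{Z_\gamma}$ as the minimal central projections. The real divergence is in showing that every block-diagonal operator lies in $M$. You argue structurally: each corner $p_{Z_\gamma}Mp_{Z_\gamma}$ is a factor containing rank-one projections, Murray--von~Neumann equivalence of those atoms produces a full system of matrix units, hence the corner is all of $L(p_{Z_\gamma}H)$, and WOT-closure of $M$ absorbs the block-diagonal sum. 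The paper instead computes the commutant $M'$ by hand: since every $p_\alpha$ lies in $M$, any $T\in M'$ must be diagonal in the basis $(e_\alpha)$, and a short combinatorial argument (exploiting that $p_C$ is \emph{not} central for $\emptyset\ne C\subsetneq Z_\gamma$) forces the diagonal entries to be constant on each $Z_\gamma$; then $M=M''$ is exactly the set of block-diagonal operators. The paper's method is more elementary, needing only the double-commutant theorem and linear algebra with diagonal operators, while yours leans on comparison theory of projections in factors; in return, your argument explains more transparently \emph{why} each central summand is a full matrix algebra. Both treatments handle the step ``separation of points $\Rightarrow$ atoms are rank one on $H$'' by assertion.
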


\begin{proof}
Since $M$ separates points of $H$, any atomic projection $p\in M$ is, when considered as an operator on $H$, an orthogonal projection to a one-dimensional subspace. So, there is an orthonormal basis $(e_\alpha)_{\alpha\in\Lambda}$ of $H$ such that, for each $\alpha\in\Lambda$, $p_\alpha$ is the projection onto the linear span of $e_\alpha$. 
For any $\alpha\in \Lambda$ set
$$Z_\alpha=\bigcap\{C\subset\Lambda\setsep \alpha\in C\ \&\ p_C\mbox{ belongs to the center of }M\}.$$
It is clear that $Z_\alpha$ is a well defined set containing $\alpha$ (hence nonempty) and that $p_{Z_\alpha}$ belongs to the center for each $\alpha\in\Lambda$. Moreover, for $\alpha,\beta\in\Lambda$ either $Z_\alpha=Z_\beta$ or $Z_\alpha\cap Z_\beta=\emptyset$. So, we have a decomposition 
$$\Lambda=\bigcup_{\gamma\in\Gamma}Z_\gamma$$
to pairwise disjoint nonempty sets such that for each $\gamma\in Z_\gamma$ the projection $p_{Z_\gamma}$ belongs to the center, but for $\emptyset\ne C\subsetneqq Z_\gamma$ the projection $p_C$ does not belong to the center.
It follows that (i) is satisfied.
It is clear that for any $T\in M$ we have
$$T=\sum_{\gamma\in\Gamma}p_{Z_\gamma}T=\sum_{\gamma\in\Gamma}p_{Z_\gamma}Tp_{Z_\gamma},$$
the sum being taken in the weak operator topology. Hence, the inclusion `$\subset$' of the equality from (ii) is valid.

To prove the converse inclusion we will use the fact that $M$ is a von Neumann algebra, hence $M$ equals its double-commutant $M''$. Therefore, it is enough to show that the commutant of $M$ equals
\begin{equation}\label{eq:M'}
M'=\left\{\sum_{\gamma\in\Gamma}\lambda_\gamma p_{Z_\gamma}\setsep \sup_{\gamma\in\Gamma}\abs{\lambda_\gamma}<\infty\right\}.\end{equation}
The inclusion `$\supset$' is obvious, let us show the converse. Let $T\in M'$. Given $\alpha\in\Lambda$, we have $p_\alpha\in M$ so $p_\alpha T=Tp_\alpha$. It follows that $T$ is a diagonal operator, i.e., 
$$T(x)=\sum_{\alpha\in\Lambda} c_\alpha \ip{x}{e_\alpha}e_\alpha, \qquad x\in H,$$
for a bounded set of coefficients $(c_\alpha)$. It remains to show that $c_\alpha=c_\beta$ if $\alpha,\beta\in Z_\gamma$ for some $\gamma$. 
So, fix $\gamma\in\Gamma$ and $\alpha\in Z_\gamma$. Set $C=\{\beta\in Z_\gamma\setsep c_\beta=c_\alpha\}$. If $C\subsetneqq Z_\gamma$, then
the projection $p_C$ does not belong to the center of $M$. It follows that there is $S\in M$ with $p_CS\ne Sp_C$. Hence at least one of the spaces
$p_CH$ and $p_{\Lambda\setminus C}H$ is not invariant for $S$.
On the other hand, $p_{Z_\gamma}H$ and $p_{\Lambda\setminus Z_\gamma}H$ are invariant for $S$ (as $p_{Z_\gamma}$ belongs to the center). It follows that there are two distinct points $\beta_1,\beta_2\in Z_\gamma$ such that exactly one of them belongs to $C$ and $\ip{Se_{\beta_1}}{e_{\beta_2}}\ne0$.
Since $T\in M'$, we have $ST=TS$, hence
$$  c_{\beta_1} \ip{Se_{\beta_1}}{e_{\beta_2}} = \ip{S(c_{\beta_1}e_{\beta_1})}{e_{\beta_2}}=\ip{STe_{\beta_1}}{e_{\beta_2}}= \ip{TSe_{\beta_1}}{e_{\beta_2}}=c_{\beta_2} \ip{Se_{\beta_1}}{e_{\beta_2}}.
$$
Since  $\ip{Se_{\beta_1}}{e_{\beta_2}}\ne0$, we deduce $c_{\beta_1}
=c_{\beta_2}$, a contradiction. This completes the proof of (ii).

It remains to show that the family $\{p_{Z_\gamma}\setsep\gamma\in\Gamma\}$ does not depend on the particular choice of the family $(p_\alpha)_{\alpha\in\Lambda}$. To see this observe that the projections $p_{Z_\gamma}$ are precisely the minimal central projections. Indeed, let $q\in M$ be a central projection. It follows from the description of $M'$ in \eqref{eq:M'} that there is a subset $\Gamma'\subset\Gamma$ such that $q=\sum_{\gamma\in\Gamma'}p_{Z_\gamma}$. So, it is clear that the minimal central projections are precisely the projections $p_{Z_\gamma}$.
\end{proof}

Now we are ready to prove the theorem.

\medskip

\noindent {\bf Proof of Theorem~\ref{T:main-vN}.}
Let  $(p_\alpha)_{\alpha\in\Lambda}$ and $(q_j)_{j\in J}$ be two families (possibly but not necessarily the same) of pairwise orthogonal atomic projections, both with sum one. For $C\subset \Lambda$ let $p_C$ have the same meaning as in Proposition~\ref{p:rozklad}. For $D\subset J$ let $q_D$ have the analogous meaning. Let us find the decomposition 
$\Lambda=\bigcup_{\gamma\in\Gamma}Z_\gamma$ using Proposition~\ref{p:rozklad}. Let us further find the analogous decomposition $J=\bigcup_{\delta\in\Delta}V_\delta$ (using the same proposition).
By Proposition~\ref{p:rozklad} there is a bijection $\theta:\Gamma\to \Delta$ such that $p_{Z_\gamma}=q_{V_{\theta(\gamma)}}$ for $\gamma\in\Gamma$.
Therefore we can assume that $\Delta=\Gamma$ and that we have
$$M=\left\{T\in L(H)\setsep T=\sum_{\gamma\in\Gamma}q_{V_\gamma}Tp_{Z_\gamma}\mbox{ in WOT}\right\}.$$
Note that using the two different orthonormal bases we can represent $M$
as
$$M=\left\{T\in L(\ell^2(\Lambda),\ell^2(J))\setsep T=\sum_{\gamma\in\Gamma}q_{V_\gamma}Tp_{Z_\gamma}\mbox{ in WOT}\right\}.$$

Now we are ready to complete the proof using Lemma~\ref{L:Kdiag} and Lemma~\ref{L:1-compl}:

For $T\in K(\ell^2(\Lambda),\ell^2(J))$ define 
$$\Phi(T)=\sum_{\gamma\in\Gamma}q_{V_\gamma}Tp_{Z_\gamma}.$$
By Lemma~\ref{L:Kdiag}(a) the series converges unconditionally in the norm and, moreover, $\Phi$ is a norm-one projection on $K(\ell^2(\Lambda),\ell^2(J))$. Then $\Phi^*$ defines a norm-one projection on $N(\ell^2(J),\ell^2(\Lambda))$ and $\Phi^{**}$ a  norm-one projection on 
$L(\ell^2(\Lambda),\ell^2(J))$. Moreover, if $T\in K(\ell^2(\Lambda),\ell^2(J))$ and $S=\sum_{n=1}^\infty \ip{\cdot}{y_n} x_n\in N(\ell^2(J),\ell^2(\Lambda))$, then
$$\begin{aligned}
\ip{\Phi^*(S)}{T}&=\ip{S}{\Phi(T)}=
\sum_{n=1}^\infty \ip{\Phi(T)x_n}{y_n}
= \sum_{n=1}^\infty \ip{\sum_{\gamma\in\Gamma}q_{V_\gamma}Tp_{Z_\gamma}x_n}{y_n}
\\&=\sum_{n=1}^\infty \sum_{\gamma\in\Gamma}\ip{Tp_{Z_\gamma}x_n}{q_{V_\gamma}y_n} =\sum_{n=1}^\infty \sum_{\gamma\in\Gamma}
\ip{\ip{\cdot}{q_{V_\gamma}y_n}p_{Z_\gamma}x_n}{T}
\\&=\ip{\sum_{\gamma\in\Gamma}\sum_{n=1}^\infty\ip{\cdot}{q_{V_\gamma}y_n }p_{Z_\gamma}x_n}{T}=\ip{\sum_{\gamma\in\Gamma}p_{Z_\gamma}Sq_{V_\gamma}}{T},
\end{aligned}
$$
hence $\Phi^*(S)=\sum_{\gamma\in\Gamma}p_{Z_\gamma}Sq_{V_\gamma}$ for $S\in N(\ell^2(J),\ell^2(\Lambda))$. Note that the series converges absolutely in the operator norm by Lemma~\ref{L:Kdiag}(b). Finally, the same computation shows that
$$\Phi^{**}(T)=\sum_{\gamma\in\Gamma}q_{V_\gamma}Tp_{Z_\gamma}, \quad T\in
 L(\ell^2(\Lambda),\ell^2(J)).$$
So, $\Phi^{**}(L(\ell^2(\Lambda),\ell^2(J)))=M$, therefore $M_*$ can be canonically identified with $\Phi^*(N(\ell^2(J),\ell^2(\Lambda))$. Since this is a $1$-complemented subspace of  $N(\ell^2(J),\ell^2(\Lambda))$, the measures of weak non-compactness with respect to $M_*$ and with respect to $ N(\ell^2(J),\ell^2(\Lambda))$ coincide by Lemma~\ref{L:1-compl}, hence we conclude using the formulas from Theorem~\ref{T:main-N}(b).\qed

\begin{remark} Proposition~\ref{p:rozklad} is a more precise version of the representation result mentioned in \cite[p. 3]{Akemann-Anderson}. A large part of the proof can be done using some results given in \cite{KR2}. Firstly, the assignment $p_\alpha\mapsto p_{Z_\alpha}$ 
corresponds to the assignement from \cite[Proposition 6.4.3]{KR2}. Secondly,
the fact that $p_{Z_\gamma}M$ is canonically isomorphic to $L(p_{Z_\gamma}H)$ is related to the combination of \cite[Proposition 6.4.3]{KR2} (which yields that $p_{Z_\gamma}M$ is a factor of type I) with \cite[Proposition 6.6.1]{KR2} (which says that a type I factor is $*$-isomorphic to $L(H)$ for a Hilbert space $H$). Anyway, for the proof of our result we needed an explicit and canonical version of these representation results.
\end{remark}

\begin{remark}
Theorem~\ref{T:main-vN} provides a partial answer to Problem~5. 
The general case remains open. A solution would require a different method, since by \cite[Theorem 4.2.2]{blackadar} the only von Neumann algebras which are $1$-complemented in $L(H)$ by a weak$^*$-to-weak$^*$ continuous projection are the atomic ones.
\end{remark}


\begin{thebibliography}{10}

\bibitem{Akemann-Anderson}
{\sc Akemann, C.~A., and Anderson, J.}
\newblock Lyapunov theorems for operator algebras.
\newblock {\em Mem. Amer. Math. Soc. 94}, 458 (1991), iv+88.

\bibitem{AC-jmaa}
{\sc Angosto, C., and Cascales, B.}
\newblock The quantitative difference between countable compactness and
  compactness.
\newblock {\em J. Math. Anal. Appl. 343}, 1 (2008), 479--491.

\bibitem{AC-meas}
{\sc Angosto, C., and Cascales, B.}
\newblock Measures of weak noncompactness in {B}anach spaces.
\newblock {\em Topology Appl. 156}, 7 (2009), 1412--1421.

\bibitem{tylli-cambridge}
{\sc Astala, K., and Tylli, H.-O.}
\newblock Seminorms related to weak compactness and to {T}auberian operators.
\newblock {\em Math. Proc. Cambridge Philos. Soc. 107}, 2 (1990), 367--375.

\bibitem{BKS-bs}
{\sc Bendov\'a, H., Kalenda, O. F.~K., and Spurn\'y, J.}
\newblock Quantification of the {B}anach-{S}aks property.
\newblock {\em J. Funct. Anal. 268}, 7 (2015), 1733--1754.

\bibitem{blackadar}
{\sc Blackadar, B.}
\newblock {\em Operator algebras}, vol.~122 of {\em Encyclopaedia of
  Mathematical Sciences}.
\newblock Springer-Verlag, Berlin, 2006.
\newblock Theory of $C^*$-algebras and von Neumann algebras, Operator Algebras
  and Non-commutative Geometry, III.

\bibitem{CKS}
{\sc Cascales, B., Kalenda, O. F.~K., and Spurn{\'y}, J.}
\newblock A quantitative version of {J}ames's compactness theorem.
\newblock {\em Proc. Edinb. Math. Soc. (2) 55}, 2 (2012), 369--386.

\bibitem{CMR}
{\sc Cascales, B., Marciszewski, W., and Raja, M.}
\newblock Distance to spaces of continuous functions.
\newblock {\em Topology Appl. 153}, 13 (2006), 2303--2319.

\bibitem{deblasi}
{\sc De~Blasi, F.~S.}
\newblock On a property of the unit sphere in a {B}anach space.
\newblock {\em Bull. Math. Soc. Sci. Math. R. S. Roumanie (N.S.) 21(69)}, 3-4
  (1977), 259--262.

\bibitem{absolutelysumming}
{\sc Diestel, J., Jarchow, H., and Tonge, A.}
\newblock {\em Absolutely summing operators}, vol.~43 of {\em Cambridge Studies
  in Advanced Mathematics}.
\newblock Cambridge University Press, Cambridge, 1995.

\bibitem{f-krein}
{\sc Fabian, M., H{\'a}jek, P., Montesinos, V., and Zizler, V.}
\newblock A quantitative version of {K}rein's theorem.
\newblock {\em Rev. Mat. Iberoamericana 21}, 1 (2005), 237--248.

\bibitem{f-subswcg}
{\sc Fabian, M., Montesinos, V., and Zizler, V.}
\newblock A characterization of subspaces of weakly compactly generated
  {B}anach spaces.
\newblock {\em J. London Math. Soc. (2) 69}, 2 (2004), 457--464.

\bibitem{Gr-krein}
{\sc Granero, A.~S.}
\newblock An extension of the {K}rein-\v {S}mulian theorem.
\newblock {\em Rev. Mat. Iberoam. 22}, 1 (2006), 93--110.

\bibitem{Gr-James}
{\sc Granero, A.~S., Hern{\'a}ndez, J.~M., and Pfitzner, H.}
\newblock The distance {${\rm dist}(\mathcal B,X)$} when {$\mathcal B$} is a
  boundary of {$B(X^{\ast\ast})$}.
\newblock {\em Proc. Amer. Math. Soc. 139}, 3 (2011), 1095--1098.

\bibitem{gro-book}
{\sc Grothendieck, A.}
\newblock {\em Espaces vectoriels topologiques}.
\newblock Instituto de Matem\'atica Pura e Aplicada, Universidade de S\~ao
  Paulo, S\~ao Paulo, 1954.

\bibitem{gro-mem}
{\sc Grothendieck, A.}
\newblock Produits tensoriels topologiques et espaces nucl\'eaires.
\newblock {\em Mem. Amer. Math. Soc. No. 16\/} (1955), 140.

\bibitem{qdpp}
{\sc Ka{\v{c}}ena, M., Kalenda, O. F.~K., and Spurn{\'y}, J.}
\newblock Quantitative {D}unford-{P}ettis property.
\newblock {\em Adv. Math. 234\/} (2013), 488--527.

\bibitem{KR2}
{\sc Kadison, R.~V., and Ringrose, J.~R.}
\newblock {\em Fundamentals of the theory of operator algebras. {V}ol. {II}},
  vol.~16 of {\em Graduate Studies in Mathematics}.
\newblock American Mathematical Society, Providence, RI, 1997.
\newblock Advanced theory, Corrected reprint of the 1986 original.

\bibitem{rdpp}
{\sc Kalenda, O. F.~K., and Spurn\'y, J.}
\newblock Quantification of the reciprocal {D}unford-{P}ettis property.
\newblock {\em Studia Math. 210}, 3 (2012), 261--278.

\bibitem{meisevogt}
{\sc Meise, R., and Vogt, D.}
\newblock {\em Introduction to functional analysis}, vol.~2 of {\em Oxford
  Graduate Texts in Mathematics}.
\newblock The Clarendon Press, Oxford University Press, New York, 1997.
\newblock Translated from the German by M. S. Ramanujan and revised by the
  authors.

\bibitem{ryan}
{\sc Ryan, R.~A.}
\newblock {\em Introduction to tensor products of {B}anach spaces}.
\newblock Springer Monographs in Mathematics. Springer-Verlag London, Ltd.,
  London, 2002.

\end{thebibliography}
\def\cprime{$'$} \def\cprime{$'$}

\end{document}